\newtheorem{theorem}{Theorem}[section]
\newtheorem{lemma}[theorem]{Lemma}
\theoremstyle{definition}
\newtheorem{remark}{Remark}
\newcommand{\Rset}{\mathbb{R}}
\newcommand{\calK}{\mathcal{K}}
\newcommand{\calN}{\mathcal{N}}
\newcommand{\calP}{\mathcal{P}}
\newcommand{\calKL}{\mathcal{KL}}
\newcommand{\bfA}{\mathbf{A}}
\newcommand{\bfB}{\mathbf{B}}
\newcommand{\bfC}{\mathbf{C}}
\newcommand{\bfD}{\mathbf{D}}
\newcommand{\bfE}{\mathbf{E}}
\newcommand{\bfF}{\mathbf{F}}
\DeclareMathOperator*{\esssup}{ess\,sup}
\title[Input-to-State Stability and Lyapunov Functions for SIR]%
{I\lowercase{nput-to-State Stability and \uppercase{L}yapunov Functions with Explicit Domains for \uppercase{SIR} Model of Infectious Diseases}}
\author[Hirosih Ito]{}
\subjclass{Primary: 93D30, 93D09; Secondary: 92D25, 34D23.}
\keywords{
Epidemic models, input-to-state stability, Lyapunov functions,  
ordinary differential equations.}
\email{hiroshi@ces.kyutech.ac.jp}
\thanks{The author is supported by JSPS KAKENHI Grant Number 20K04536.}
\begin{document}
\maketitle

\centerline{\scshape Hiroshi Ito$^*$}
\medskip
{\footnotesize
 \centerline{Department of Intelligent and Control Systems}
   \centerline{Kyushu Institute of Technology}
   \centerline{680-4 Kawazu, Iizuka 820-8502, Japan}
} 

\medskip


\bigskip


\begin{abstract}
This paper demonstrates input-to-state  stability (ISS) of the SIR model of 
infectious diseases with respect to the disease-free equilibrium and 
the endemic equilibrium. Lyapunov functions are constructed to verify 
that both equilibria are individually robust with respect to perturbation of 
newborn/immigration rate which determines the eventual state of 
populations in epidemics. The construction and analysis are geometric and global
in the space of the populations. In addition to the establishment of ISS, this paper 
shows how explicitly the constructed level sets reflect the flow of trajectories. 
Essential obstacles and keys for the construction of Lyapunov functions 
are elucidated. 
The proposed Lyapunov functions which have strictly negative 
derivative allow us to not only establish ISS, but also 
get rid of the use of LaSalle's invariance principle and 
popular simplifying assumptions. 

\end{abstract}

\section{Introduction}

For infectious diseases, mathematical models play two major roles in helping 
epidemiologist and societies design schemes aiming to improve control or 
eradicate the infection from population \cite{Keelinfdiseasbook09}. 
One role is quantitative prediction in which its accuracy is the primary concern. 
The other is qualitative understanding of epidemiological processes. 
For the latter, analytical studies on simple models have been providing 
generic interpretations of behavior of diseases transmission and spread. 
This paper pursues this direction by focusing on the popular model 
called the SIR model \cite{Dietz67,KermackEpidemics27}. 

The SIR model has an endemic equilibrium and a disease-free equilibrium. 
If the newborn rate is large in the population, the endemic equilibrium emerges and 
the trajectory of populational behavior heads for the equilibrium. 
Here, the newborn rate is the external signal flowing into the SIR model, 
and it describes not only birth, but also the susceptible flux entering 
the area to which populations of interest belongs, i.e., immigration of 
susceptible individuals. 

Stability is a fundamental concept that characterizes behavior 
of dynamics for each equilibrium. Roughly, asymptotic stability gives 
a guarantee that trajectories starting sufficiently near the target 
equilibrium converges to the equilibrium. 
Jacobian linearization, which is called 
Lyapunov's first method, explains asymptotic stability of 
the two equilibria \cite{Keelinfdiseasbook09,KuniyaJac18}. 
Drawing phase portraits has also visualized the behavior
outside the sufficiently small neighborhood of each equilibrium \cite{HETHinfdiseas}. 
For systematic analysis outside the small neighborhood, 
many studies constructed Lyapunov functions to invoke 
Lyapunov's second method for the SIR model 
and its variants 
(see \cite{KOROLyap02,KOROLyap04,ChenLyap04,FALLIDlypu07,OREGAN2010446,EnaNakIDlyapdelay11,NakEnaIDlyap14,SHUAIIDlyapu13,ChenLyapDI14,ElazIFDvacc19} 
and references therein). 
However, it has not been successful satisfactorily for next steps. 
Unless reasonable sublevel sets of constructed Lyapunov functions are confirmed, 
computing negative derivative of the functions along the 
trajectories cannot go beyond the local analysis 
Jacobian linearization offers. 
Sublevel sets are the only means to estimate of the domain of attraction 
in Lyapunov's second method \cite{Khalil_Book_02}.

Since achieving the negative derivative in reasonably large sublevel sets 
has been too hard for the SIR model, many preceding studies invoke 
LaSalle's invariance principle to relax the negativity into 
non-positivity \cite{Khalil_Book_02}. 
To use LaSalle's invariance principle, 
the notable study \cite{KOROLyap02} proposed to use a simplified model in which 
the newborn rate is endogenously determined to keep precise 
conservation of the total population. The key is that the simplification 
reduces the dimension of the system, and leads to an one-dimensional 
subspace for which the argument of LaSalle's invariance principle is 
effective since oscillation are not possible. 
The approach has facilitated the use of Lyapunov functions 
in infectious diseases widely 
(see, e.g., \cite{KOROLyap04,KOROgennonID06,EnaNakIDlyapdelay11} to name a few). 
However, it remains true that 
the simplifying assumption limits the use of models in prediction 
and understanding the disease transmission. 
In fact, the simplification ignores not only the actual newborn rate 
and its perturbation, but also individuals entering the area. 
Furthermore, LaSalle's invariance principle is invalid in the 
presence of time-varying parameters. Indeed, 
the non-positivity of the derivative does not have margins to 
accommodate perturbations and external fluxes. 
Strict negativity of the derivative is useful, and such Lyapunov 
functions are called strict Lyapunov functions \cite{MalFredStrLyapBook09}. 
The first objective of this paper is to construct a strict Lyapunov function 
for the SIR model without the simplification and the invariance principle, 
and to investigate its sublevel sets for understanding the attractivity 
behavior of the two equilibria on the entire state space. 

The second objective is to demonstrate robustness of the SIR model. 
Since the SIR model is nonlinear, asymptotic stability does not guarantee 
anything about behavior of trajectories in the presence of 
the variation of external parameters or disturbances \cite{Khalil_Book_02}. 
This paper employs the notion of input-to-state stability (ISS) 
to evaluate robustness of the SIR model with respect to perturbation 
of the newborn/immigration rate \cite{SONCOP}. 
To the best of the authors' knowledge, 
this ISS property has not been investigated for models of infectious 
diseases. 
Here, the perturbation input is for neither control input nor an operating variable. 
The word ``input'' originates from the terminology ``input-to-state stability'' 
which is a concept widely used in the field of nonlinear control systems 
\cite{Khalil_Book_02,MalFredStrLyapBook09}. 
The ``input'' represents uncertainty, parameter variation and disturbance. 
A nominal model is never perfect, In particular, in a real society, 
the newborn/immigration rate cannot always be maintained at a nominal value 
one wants to assume. 
To assess robustness with respect to that perturbation, 
this paper constructs functions called ISS Lyapunov functions \cite{SONISSV}. 
As a matter of fact, this construction leads to an answer to the first 
objective. When the newborn/immigration rate is constant, 
the ISS property reduces to the asymptotic stability. 
The constructed Lyapunov functions have negative derivative, and they address 
external variations by getting rid of LaSalle's invariance principle. 
Recall that the SIR model has two equilibria, and 
a bifurcation occurs as the newborn/immigration rate changes.
The paper demonstrates that the bifurcation takes place as 
a continuous change of the transient and the steady state with respect to the change of 
the newborn/immigration rate. The bifurcation is not a discontinuous phenomenon. 
This is true in both directions, from the disease-free equilibrium to 
the endemic equilibrium, and vice versa. 

\section{Preliminaries}\label{sec:symbols}

This paper uses the symbols $\Rset:=(-\infty,\infty)$, 
$\Rset_+:=[0,\infty)$ and $\Rset_+^n:=[0,\infty)^n$. 
For $v\in \Rset^n$, the symbol $|v|$ denotes a norm
which is selected consistently throughout the paper. 
It is the absolute value if $n=1$. 
This paper writes $\Gamma\in\calP$ if $\Gamma : \Rset_+\to\Rset_+$ is continuous  
and satisfies $\Gamma(0)=0$ and $\Gamma(s)>0$ for 
all $s\in\Rset_+\setminus\{0\}$. 
A function $\Gamma\in\calP$ is said to be of class $\calK$ and 
written as $\Gamma\in\calK$ if it is strictly increasing. 
A class $\calK$ function is said to be 
of class $\calK_{\infty}$ if it is unbounded. 
A continuous function $\Phi : \Rset_+\times\Rset_+\to\Rset_+$ is 
said to be of class $\calKL$ if, for each fixed $t\geq 0$, 
$\Phi(\cdot,t)$ is of class $\calK$ and, 
for each fixed $s>0$, $\Phi(s,\cdot)$ is 
decreasing and $\lim_{t\to\infty}\Phi(s,t)=0$. 
The zero function of appropriate dimension is denoted by $0$. 
Composition of the functions $\Gamma_1, \Gamma_2: \Rset\to\Rset$ is expressed as 
$\Gamma_1\circ\Gamma_2$. 
For a continuous function $f: \Rset^n\times\Rset^p\to\Rset^n$ satisfying 
$f(0,0)=0$, a system of the form 
\begin{align}
\dot{x}(t)=f(x(t),u(t)) 
\label{eq:notationsys}
\end{align}
is said to be input-to-state stable (ISS) with respect to the input $u$ 
\cite{SONCOP} if there exist $\Phi\in\calKL$ and $\Gamma\in{\calK}\cup\{0\}$ 
such that, for all continuous functions 
$u:\Rset_+\to\Rset^p$, all $x(0)\in\Rset^n$ and all $t\ge 0$, its unique solution 
$x(t)$ exists and satisfies
\begin{align}
\forall t\in\Rset_+\hspace{1.5ex}
|x(t)|\le
\Phi(|x(0)|, t) + \Gamma({\esssup}_{t\in\Rset_+}|u(t)|) . 
\label{eq:defISS}
\end{align}
The function $\Gamma$ is called an ISS-gain function. 
ISS of \eqref{eq:notationsys} implies globally asymptotic stability of the 
equilibrium $x=0$ for $u=0$. 
If a radially unbounded and continuously differentiable function 
$V: \Rset^n\to\Rset_+$ satisfies 
\begin{align}
&
\forall x\in\Rset^n\hspace{1.5ex}
\forall u\in\Rset^p\hspace{1.5ex}
\nonumber\\[-.3ex]
&
\hspace{4ex}
V(x)\ge \chi(|u|) 
\ \Rightarrow\ 
\frac{\partial V}{\partial x}(x)f(x,u)\le -\alpha(V(x))
\label{eq:ISSLyap}
\end{align}
for some $\chi\in\calK$ and some $\alpha\in\calP$, 
the function $V(x)$ is said to be an ISS Lyapunov function\footnote{
The original definition in \cite{SONISSV} employs 
$\alpha\in\calK$. However, the function $V$ can always be 
rescaled to modify $\alpha\in\calP$ into a class $\calK$ function.}. 
The existence of an ISS Lyapunov function 
guarantees ISS of system \eqref{eq:notationsys} \cite{SONISSV}. 
An ISS-gain function in \eqref{eq:defISS} is obtained as 
$\Gamma=\underline{\alpha}^{-1}\circ\chi$, where 
$\underline{\alpha}$ is a class $\calK_\infty$ function satisfying 
$\underline{\alpha}(|x|)\le V(x)$ for all $x\in\Rset^n$. 
ISS Lyapunov functions become conventional Lyapunov functions 
when $u=0$. 
All the above are standard definitions given for sign-indefinite system \eqref{eq:notationsys}. 
When the vector field $f$ generates only non-negative $x(t)$ in 
\eqref{eq:notationsys} defined with $x(0)\in\Rset_+^n$ and $u(t)\in\Rset_+^p$, all the above 
definitions and facts are valid 
by replacing $\Rset$ with $\Rset_+$.


For scalar $u$, one can define ISS with respect to the input $u(t)$ 
restricted to a range $(-\underline{u},\overline{u})$ for some 
constants $\underline{u}$ ,$\overline{u}\in\Rset_+\cup\{\infty\}$. 
To assess such ISS, one can just introduce a bijective function 
$\zeta: \Rset\to (-\underline{u},\overline{u})$ in $f(x,u)$ as 
$f(x,\zeta(r))$, where $\zeta(0)=0$. 
The standard restriction-free characterization \eqref{eq:ISSLyap} 
can be applied to $f(x,\zeta(r))$ with the auxiliary non-restricted input $r$. 
In this paper, for a compact set $\Omega\in\Rset^n$ satisfying $0\in\Omega$, 
system \eqref{eq:notationsys} is said to be ISS on the set $\Omega$ 
with respect to the input $u$ satisfying 
$u(t)\in(-\underline{u},\overline{u})$ if 
\eqref{eq:defISS} holds for all $x(0)\in\Omega$ and all 
$u(t)\in(-\underline{u},\overline{u})$. 
To measure the magnitude of $x$, the implication 
\eqref{eq:ISSLyap} employs $V(x)$ instead of $|x|$. Hence, 
ISS on $\Omega$ is implied by \eqref{eq:ISSLyap} if 
$x\in\Rset^n$ in \eqref{eq:ISSLyap} is replaced with a sublevel set 
\begin{align}
\overline{\Omega}(L):=\left\{x\in\Rset^n : L\ge V(x)\right\}
\end{align}
containing $\Omega$ and satisfying $L\ge \chi(|u|)$ for all $u$. 


If the function $V$ is not continuously differentiable, but 
locally Lipschitz, 
${\partial V}/{\partial x}\cdot f$ in \eqref{eq:ISSLyap} is replaced by 
\begin{align}
D^+V(x,u):=\liminf_{t \rightarrow 0+}
\frac{(V(\psi(t,x,u))-V(x))}{t}, 
\label{eq:defdini}
\end{align}
where $\psi(t,x,u)$ is the solution of \eqref{eq:notationsys}
with the initial condition $x$ and the input function $u$. 
If one writes it explicitly, 
\begin{align}
&
\forall x\in\Rset^n\hspace{1.5ex}
\forall u(0)\in\Rset^p\hspace{1.5ex}
\nonumber\\[-.3ex]
&
\hspace{4ex}
V(x)\ge \chi(|u(0)|) 
\ \Rightarrow\ 
D^+V(x,u) \le -\alpha(V(x)) .
\label{eq:ISSLyapDeni}
\end{align}
Let $\calN$ denote the subset of $\Rset^n$ where the gradient 
${\partial V}/{\partial x}$ does not exist. 
Rademacher's theorem shows that the set $\calN$ has measure 
zero for a locally Lipschitz $V$. Furthermore, 
the lower Dini derivative 
\eqref{eq:defdini} for each fixed $u$ agrees with 
$({\partial V}/{\partial x})f$ except in $\calN$. 
The existence of an ISS Lyapunov function defined with 
\eqref{eq:defdini} guarantees ISS of system \eqref{eq:notationsys} 
since $f$ and $\alpha$ continuous functions \cite{BACROSLiapbook05}. 

\begin{remark}
This paper demonstrates ISS of an epidemic model. Here, 
it is worth recalling that 
for nonlinear systems, global asymptotic stability of an equilibrium 
cannot guarantee boundedness of the state with respect to input of 
bounded magnitude \cite{Khalil_Book_02}. In fact, for example, the origin $I=0$ of 
$I$-system in \eqref{eq:sirI} is globally asymptotic stable 
for the nil input $S=0$, while the constant input 
$S>(\gamma+\mu)/\beta$ makes $I(t)$ unbounded. Therefore, 
$I$-system \eqref{eq:sirI} is not ISS. 
\end{remark}

\section{SIR Model}\label{sec:sir}

Let $x(t):=[S(t), I(t), R(t)]^T\in\Rset_+^3$ and assume that it satisfies 
\begin{subequations}\label{eq:sir}
\begin{align}
\dot{S}=&B -\mu S -\beta IS
\label{eq:sirS}
\\
\dot{I}=&\beta IS -\gamma I-\mu I
\label{eq:sirI}
\\
\dot{R}=&\gamma I-\mu R
\label{eq:sirR}
\end{align}
\end{subequations}
defined for any $x(0):=[S(0), I(0), R(0)]^T\in\Rset_+^3$ and 
any continuous function $B: \Rset_+\to\Rset_+$. 
In fact, for each $x(0)$ and $B$, the equation \eqref{eq:sir} admits a 
unique maximal solution $x(t)$ \cite{Khalil_Book_02}.  
Equation \eqref{eq:sir}, which is expressed compactly as 
\begin{align}\label{eq:sir_xform}
\dot{x}(t)=f(x(t),u(t))  
\end{align}
with the vector field $f=[f_1,f_2,f_3]^T$ and the input $u=B$, also guarantees 
$x_i(t)\ge 0$, $t\in\Rset_+$, 
for each $i=1,2,3$ since $f_i(x,u)\ge 0$ holds at $x_i=0$ for each $i=1,2,3$. 
The variables $x_1(t)$, $x_2(t)$ and $x_3(t)$ are denoted by 
$S(t)$, $I(t)$ and $R(t)$, respectively, since \eqref{eq:sir} is the equation 
popular model called SIR model (with demography) for infectious diseases 
\cite{Dietz67,KermackEpidemics27,Keelinfdiseasbook09}. 
The variable $S(t)$ describes the (continuum) number of the susceptible population, 
$I(t)$ is that of the infected population, while 
$R(t)$ is of the population recovered with immunity. 
The variable $B(t)$ is the newborn/immigration rate.  
The positive numbers $\beta$, $\gamma$ and $\mu$ are 
the transmission rate, the recovery rate and the death rate, respectively. 
Define the total population $N(t):=S(t)+I(t)+R(t)$ as usual. Since 
\begin{align}
\dot{N}(t)= B-\mu N(t)
\label{eq:issofn}
\end{align}
follows from \eqref{eq:sir}, $x(t)$ exists for all $t\in\Rset_+$, which 
is referred to the forward completeness of system \eqref{eq:sir_xform}. 
Property \eqref{eq:issofn} also implies that 
system \eqref{eq:sir_xform} is ISS with respect to 
the input $u$ \cite{HIinfectd20iiss}. Indeed, it is easy to see that 
\begin{align}
S(t)+I(t)+R(t)
&\le e^{-t}\left(S(0)+I(0)+R(0)-\frac{\overline{B}}{\mu}\right)+\frac{\overline{B}}{\mu}
\nonumber\\
&\le e^{-t}(S(0)+I(0)+R(0))+\frac{\overline{B}}{\mu}
\label{eq:Nest}
\end{align}
for all $t\in\Rset_+$ with respect to any $B(t)\in[0,\overline{B}]$. 
As discussed in \cite{HIinfectd20iiss}, 
$I$-system \eqref{eq:sirI} is not ISS with respect to its input $S$. 
The absence of ISS is characterized there as 
strong integral input-to-state stability on which 
this paper does not go into detail \cite{SontagSCL98,MIRITObil16,CHAANGITOStrISS14}. 
Interestingly, the absence of ISS of $I$-system provides a bifurcation 
selecting one of the two equilibria $x_e$ and $x_f$ depending 
on $\hat{R}_0$ to be explained below. $S$-system \eqref{eq:sirS} compensates 
the weak stability of $I$-system so that 
the overall system \eqref{eq:sir} is ISS.

Clearly, if the newborn/immigration rate is constant, i.e., 
$B(t)\equiv \hat{B}\ge 0$. equation \eqref{eq:sir} has two equilibria
\begin{align}
&
x_f:=\left[\frac{\hat{B}}{\mu}, 0, 0\right]^T
\label{eq:xf}
\\
&
x_e:=\left[
\frac{\gamma+\mu}{\beta}, 
\frac{\mu(\hat{R}_0-1)}{\beta}, 
\frac{\gamma(\hat{R}_0-1)}{\beta}
\right]^T, 
\label{eq:xe}
\end{align}
where the non-negative number 
\begin{align}
\hat{R}_0:=
\frac{\beta \hat{B}}{\mu(\gamma+\mu)}
\label{eq:R0hat}
\end{align}
is called the basic reproduction number \cite{Keelinfdiseasbook09}. 
The former state $x_f$ is called the disease-free equilibrium, while
the latter $x_e$ is called the endemic equilibrium. 
When $\hat{R}_0<1$, the endemic equilibrium $x_e$ disappears since 
$x(t)\in\Rset_+^3$. For $\hat{R}_0=1$, $x_e$ coincides with $x_f$. 
By local analysis based on Jacobian linearization\footnote{In the 
field of nonlinear systems and control, the term ``local'' is used 
exclusively for the existence of a sufficiently small set in which 
a claimed property holds true. One cannot specify the set a priori.}, 
the disease-free equilibrium $x_f$ is asymptotically stable
if $\hat{R}_0\le 1$ for the constant $B(t)\equiv \hat{B}\ge 0$ 
(see, e.g., \cite{Keelinfdiseasbook09}). 
The endemic equilibrium $x_e$ is asymptotically stable
if $\hat{R}_0> 1$. Here, as in the fundamental of stability theory, 
the proved asymptotic stability is 
local in the sense that the estimated domain of attraction is a 
sufficiently small neighborhood of the equilibrium. 
Construction of a Lyapunov function has a potential to go beyond the 
local property \cite{Khalil_Book_02}. 
If a Lyapunov function is found, an appropriate sublevel set of the 
function can be an estimate of the domain of attraction.

Once one of $x_f$ and $x_e$ is chosen as the target equilibrium. 
let $\hat{x}\in\Rset_+^3$ denote the chosen equilibrium and define 
\begin{align}
&
\tilde{x}(t):=x(t)-\hat{x}
\\
&
\tilde{u}(t):=B(t)-\hat{B}. 
\end{align}
Then the SIR model \eqref{eq:sir} can be rewritten as
\begin{align}\label{eq:sir_xformtil}
\dot{\tilde{x}}=\tilde{f}(\tilde{x},\tilde{u}),  
\end{align}
where the function $\tilde{f}=[\tilde{f}_1,\tilde{f}_2,\tilde{f}_3]^T$ 
satisfies $\tilde{f}(0,0)=0$. 
For brevity, let $[-\hat{x}_i,\infty)^3$
denote $[-\hat{x}_1,\infty)\times[-\hat{x}_2,\infty)\times[-\hat{x}_3,\infty)$. 
System \eqref{eq:sir_xformtil} is defined on $[-\hat{x}_i,\infty)^3$. 
The main objective of this paper is to prove that 
system \eqref{eq:sir_xformtil} is ISS with respect to 
the newborn/immigration rate perturbation $\tilde{u}$ on the entire 
state space of $\tilde{x}$. 
This property is not obvious from \eqref{eq:Nest} since ISS requires 
not only boundedness of the state $\tilde{x}$, but also 
a gain function that characterizes the boundedness as 
a continuous function $\Gamma$ of the input $\tilde{u}$ so that 
asymptotic stability is included as a special case, i.e., \eqref{eq:defISS}. 
Importantly, another major objective is the construction of 
an ISS Lyapunov function which serves as an classical (but, strict) 
Lyapunov function when $\tilde{u}=0$. 

\begin{remark}
This paper does not introduce assumptions on $B$ to make the analysis 
simple. For example, if $B=\mu(S+I+R)$ or an equivalent formulation 
is assumed, we have $S(t)+I(t)+R(t)=N$ for all $t\in\Rset_+$ with a positive constant $N$ 
\cite{Keelinfdiseasbook09}. This 
dependence between variables allows one to remove one of the three variable 
from \eqref{eq:sir}. 
Many analytical studies assume this simplification 
(e.g., \cite{KOROLyap02,KOROgennonID06,OREGAN2010446}), and 
the equation is sometimes called the SIRS model. 
The same implication has also been employed for variants 
of the SIR model in some studies (e.g., \cite{LIMULDseirlyapu95,KOROLyap04,EnaNakIDlyapdelay11,TianNetSIRLyap20}). 
The assumption allows us to understand basic mechanisms of disease models clearly, and 
the aforementioned studies have provided a lot of important observations we now rely on. 
Nevertheless, the simplification disallows one to consider perturbation of birth and 
immigration, and $B$ becomes endogenous. 
The simplification prevents the robustness analysis on which this paper focuses. 
\end{remark}

\section{Disease-Free Equilibrium}\label{sec:lyapxf}

The first result in this paper is stated as the next theorem. 

\begin{theorem}\label{thm:sirf}
Suppose that $\hat{B}>0$ and 
\begin{align}
\hat{R}_0< 1 
\label{eq:R0bifurtoxf}
\end{align}
hold. Let $\hat{x}=x_f$. 
Then the disease-free equilibrium $\tilde{x}=0$ of the SIR model \eqref{eq:sir} 
is asymptotically stable, and the set $[-\hat{x}_1,\infty)\times\Rset_+^2$ is 
the domain of attraction. Moreover, 
the SIR model \eqref{eq:sir} is ISS on $[-\hat{x}_1,\infty)\times\Rset_+^2$ 
with respect to the newborn rate 
perturbation $\tilde{u}$ satisfying 
\begin{align}
\forall t\in\Rset_+\hspace{1.5ex}
\tilde{u}(t)\in [-\hat{B},\infty) . 
\label{eq:Brange}
\end{align}
Furthermore, the function 
$\tilde{V}: \Rset\times\Rset_+^2\to\Rset_+$ defined by 
\begin{align}
&
\tilde{V}(\tilde{x})\!=\hspace{-.5ex}\left\{\begin{array}{ll}
-\dfrac{\mu_0\tilde{x}_1}{\beta \hat{x}_1}
&
\tilde{x}_1<
-\dfrac{\beta\hat{x}_1}{\mu_0}(\tilde{x}_2\!+\!\lambda_3\tilde{x}_3)
\\[1.8ex]
\tilde{x}_2+\lambda_3\tilde{x}_3,
&
-\dfrac{\beta\hat{x}_1}{\mu_0}(\tilde{x}_2\!+\!\lambda_3\tilde{x}_3)
\le \tilde{x}_1< 0 
\\[1.8ex]
\tilde{x}_1+\tilde{x}_2+\lambda_3\tilde{x}_3, 
&
0 \le \tilde{x}_1
\end{array}\right.
\label{eq:defVf}
\\
&
0<\mu_0<\mu
\label{eq:mu0}
\\
&
\max\left\{\frac{\mu}{\gamma+\mu}-\hat{R}_0,\, 0\right\}<
\epsilon <1-\hat{R}_0
\label{eq:epsi}
\\
&
\gamma_0=
(\gamma+\mu)(\hat{R}_0+\epsilon)-\mu
\label{eq:gam0}
\\
&
\lambda_3=1-\frac{\gamma_0}{\gamma}
\label{eq:lamsirf}
\end{align}
is locally Lipschitz on $\Rset\times\Rset_+^2$, 
and an ISS Lyapunov function on $[-\hat{x}_1,\infty)\times\Rset_+^2$ 
with respect to \eqref{eq:Brange}. 
\end{theorem}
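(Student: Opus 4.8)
The plan is to verify that the piecewise-linear function $\tilde{V}$ in \eqref{eq:defVf} satisfies the ISS Lyapunov inequality \eqref{eq:ISSLyapDeni} on the sublevel sets contained in $[-\hat{x}_1,\infty)\times\Rset_+^2$, and then to invoke the standard fact recalled in the Preliminaries that existence of such a (locally Lipschitz) ISS Lyapunov function implies ISS, hence asymptotic stability when $\tilde{u}=0$. First I would record that $\tilde{V}$ is continuous across the two switching surfaces $\tilde{x}_1=0$ and $\tilde{x}_1=-\tfrac{\beta\hat{x}_1}{\mu_0}(\tilde{x}_2+\lambda_3\tilde{x}_3)$ — on the first surface both middle and bottom branches give $\tilde{x}_2+\lambda_3\tilde{x}_3$, and on the second the top and middle branches agree by direct substitution — and that each branch is linear, so $\tilde{V}$ is locally Lipschitz on $\Rset\times\Rset_+^2$; also $\tilde{V}\ge 0$ there, with a class $\calK_\infty$ lower bound $\underline{\alpha}(|\tilde{x}|)$ obtainable because in the region $\tilde{x}\in[-\hat{x}_1,\infty)\times\Rset_+^2$ the coordinates $\tilde{x}_2,\tilde{x}_3$ are $\ge 0$ and $\lambda_3>0$ (which follows from \eqref{eq:epsi}–\eqref{eq:lamsirf}, since $\epsilon<1-\hat{R}_0$ forces $\gamma_0<\gamma$, while the lower bound on $\epsilon$ keeps $\gamma_0>-\mu$ hence, with a little care, $\lambda_3\in(0,1)$), so no direction of $\tilde{x}$ escapes detection. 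Radial unboundedness follows likewise: along $\tilde{x}_1\to+\infty$ the bottom branch blows up; along $\tilde{x}_1\to-\hat{x}_1^{+}$ (i.e. $\tilde{x}_1$ bounded below) with $\tilde{x}_2$ or $\tilde{x}_3\to\infty$ one is eventually in the top branch which again blows up.

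The core computation is the Dini derivative $D^+\tilde{V}(\tilde{x},\tilde{u})$ along \eqref{eq:sir_xformtil}, done branch by branch. On the bottom branch ($\tilde{x}_1\ge 0$), $\tilde{V}=\tilde{x}_1+\tilde{x}_2+\lambda_3\tilde{x}_3$, and using \eqref{eq:sirS}–\eqref{eq:sirR} together with $B=\hat{B}+\tilde{u}$, $\hat{B}=\mu\hat{x}_1$ one gets $\dot{\tilde V}=\tilde u-\mu\tilde x_1-(\gamma+\mu)\tilde x_2-\lambda_3\mu\tilde x_3+\lambda_3\gamma\tilde x_2$; the cross term $-\beta I S$ from $\dot S$ cancels the $+\beta IS$ in $\dot I$, and collecting $\tilde x_2$ coefficients gives $-(\gamma+\mu)+\lambda_3\gamma=-(\gamma+\mu)+(\gamma-\gamma_0)=-(\mu+\gamma_0)$, which is negative by $\gamma_0>-\mu$; so $\dot{\tilde V}\le \tilde u-\mu_0(\tilde x_1+\tilde x_2+\lambda_3\tilde x_3)$ after absorbing slack (here $\mu_0<\mu$ and $\mu_0<\mu+\gamma_0$, $\mu_0<\mu$ are exactly the margins \eqref{eq:mu0} buys), i.e. $\dot{\tilde V}\le -\mu_0\tilde V+\tilde u$. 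On the middle branch ($-\tfrac{\beta\hat{x}_1}{\mu_0}(\tilde x_2+\lambda_3\tilde x_3)\le\tilde x_1<0$), $\tilde V=\tilde x_2+\lambda_3\tilde x_3$ does not see $\tilde u$ at all; $\dot{\tilde V}=\beta I S-(\gamma+\mu)\tilde x_2+\lambda_3\gamma\tilde x_2-\lambda_3\mu\tilde x_3=\beta I(\hat x_1+\tilde x_1)-(\gamma+\mu)I+\lambda_3\gamma I-\lambda_3\mu\tilde x_3$; since $\beta\hat x_1=\gamma+\mu$ this is $\beta I\tilde x_1+\lambda_3\gamma I-\lambda_3\mu\tilde x_3$, and on this branch $\tilde x_1<0$ gives $\beta I\tilde x_1\le 0$, in fact $\beta I\tilde x_1\le -\mu_0(\tilde x_2+\lambda_3\tilde x_3)\cdot(\text{something})$ — more carefully, one uses $\tilde x_1\le 0$ to drop $\beta I\tilde x_1$ and then bounds $\lambda_3\gamma I=(\gamma-\gamma_0)\tilde x_2\le\gamma\tilde x_2$ against the guaranteed negativity; the cleanest route is to show the coefficient of $\tilde x_2$ in $\lambda_3\gamma I+\beta I\tilde x_1$ is controlled because on the middle region $\tilde x_1\le -\tfrac{\beta\hat x_1}{\mu_0}\tilde V\cdot\tfrac{\mu_0}{\beta\hat x_1}$… so $\beta\tilde x_1\le -\mu_0$ scaled by $\tilde V/\tilde x_2$-type quantities; ultimately $\dot{\tilde V}\le -\mu_0\tilde V$. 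On the top branch ($\tilde x_1<-\tfrac{\beta\hat x_1}{\mu_0}(\tilde x_2+\lambda_3\tilde x_3)$, in particular $\tilde x_2+\lambda_3\tilde x_3<0$, but recall $\tilde x_2,\tilde x_3\ge 0$ so this region forces $\tilde x_2=\tilde x_3=0$, $\tilde x_1<0$), $\tilde V=-\tfrac{\mu_0\tilde x_1}{\beta\hat x_1}$ and $\dot{\tilde V}=-\tfrac{\mu_0}{\beta\hat x_1}(\tilde u-\mu\tilde x_1)=-\tfrac{\mu_0}{\beta\hat x_1}\tilde u-\mu\tilde V\le -\mu_0\tilde V-\tfrac{\mu_0}{\beta\hat x_1}\tilde u$.

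Having $D^+\tilde V(\tilde x,\tilde u)\le -\mu_0\tilde V(\tilde x)+c\,|\tilde u|$ on every branch (with $c=\max\{1,\mu_0/(\beta\hat x_1)\}$ and the usual convention that on a switching surface $D^+$ is the max over incident branches — both give the same bound, so no Filippov subtlety arises), I would set $\chi(s):=\tfrac{2c}{\mu_0}s\in\calK$ and $\alpha(r):=\tfrac{\mu_0}{2}r\in\calP$: then $\tilde V\ge\chi(|\tilde u|)$ gives $c|\tilde u|\le\tfrac{\mu_0}{2}\tilde V$, hence $D^+\tilde V\le -\tfrac{\mu_0}{2}\tilde V=-\alpha(\tilde V)$, which is exactly \eqref{eq:ISSLyapDeni}. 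By the Preliminaries this yields ISS on the sublevel sets $\overline\Omega(L)\subset[-\hat x_1,\infty)\times\Rset_+^2$, and since every point of $[-\hat x_1,\infty)\times\Rset_+^2$ lies in some such sublevel set (forward invariance of this set follows from $S,I,R\ge 0$ as noted after \eqref{eq:sir_xform}) and, for bounded $\tilde u$, the reachable set is bounded by \eqref{eq:Nest}, ISS holds on the whole of $[-\hat x_1,\infty)\times\Rset_+^2$ with the restriction \eqref{eq:Brange}; setting $\tilde u\equiv 0$ gives global asymptotic stability with that set as domain of attraction. The main obstacle I anticipate is the middle branch: there $\tilde V$ is insensitive to $\tilde u$ so one cannot afford any positive contribution, and one must extract genuine negativity purely from the geometric constraint $\tilde x_1\le -\tfrac{\beta\hat x_1}{\mu_0}(\tilde x_2+\lambda_3\tilde x_3)$ combined with $\tilde x_2,\tilde x_3\ge0$ and the sign inequalities $0<\mu_0<\mu$, $-\mu<\gamma_0<\gamma$ — i.e. verifying that the constants \eqref{eq:mu0}–\eqref{eq:lamsirf} have been chosen so that $\lambda_3\gamma I+\beta I\tilde x_1\le -\mu_0(\tilde x_2+\lambda_3\tilde x_3)+(\text{nonpositive in }\tilde x_3)$ across that whole wedge, which is where the hypothesis $\hat R_0<1$ in \eqref{eq:R0bifurtoxf} is actually consumed (it is what makes the admissible interval for $\epsilon$ in \eqref{eq:epsi} nonempty, equivalently $\gamma_0<\gamma$ with room to spare).
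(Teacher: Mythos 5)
Your overall architecture (branch-by-branch Dini derivative, assemble a linear dissipation inequality, invoke the Preliminaries) is exactly the paper's, and your bottom branch ($\tilde{x}_1\ge 0$) is correct. But both of the other two branches contain errors that would make the proof fail as written.

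\emph{Middle branch.} You substitute $\beta\hat{x}_1=\gamma+\mu$. That is the endemic relation; here $\hat{x}=x_f$ gives $\hat{x}_1=\hat{B}/\mu$, so $\beta\hat{x}_1=(\gamma+\mu)\hat{R}_0$, and the hypothesis $\hat{R}_0<1$ makes this \emph{strictly less} than $\gamma+\mu$. With your substitution the term $\beta I\hat{x}_1-(\gamma+\mu)I$ vanishes and you are left trying to dominate $+\lambda_3\gamma I$ using only $-\lambda_3\mu\tilde{x}_3$ and $\beta I\tilde{x}_1\le 0$, which cannot work (the region constraint bounds $\tilde{x}_1$ from \emph{below}, so it cannot make $\beta I\tilde{x}_1$ more negative). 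The actual source of negativity is precisely what you discarded: $\beta I\hat{x}_1-(\gamma+\mu)I=-(\gamma+\mu)(1-\hat{R}_0)I$, and a direct computation using $\lambda_3\gamma=\gamma-\gamma_0$ and $\gamma_0+\mu=(\gamma+\mu)(\hat{R}_0+\epsilon)$ gives
\begin{align*}
\dfrac{\partial\tilde{V}}{\partial\tilde{x}}\tilde{f}
=-(\gamma_0+\mu-\beta x_1)\tilde{x}_2-\lambda_3\mu\tilde{x}_3
\le -(\gamma+\mu)\epsilon\,\tilde{x}_2-\lambda_3\mu\tilde{x}_3,
\end{align*}
so the margin is $(\gamma+\mu)\epsilon$ and this is where $\hat{R}_0<1$ (which makes \eqref{eq:epsi} nonempty) is truly consumed, not merely via $\gamma_0<\gamma$.

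\emph{Top branch.} You infer from $\tilde{x}_1<-\tfrac{\beta\hat{x}_1}{\mu_0}(\tilde{x}_2+\lambda_3\tilde{x}_3)$ that $\tilde{x}_2+\lambda_3\tilde{x}_3<0$ and hence $\tilde{x}_2=\tilde{x}_3=0$. That inference is invalid: the region contains points with $\tilde{x}_2,\tilde{x}_3>0$ and $\tilde{x}_1$ sufficiently negative. Consequently you dropped the bilinear term $-\beta IS$ from $\dot{S}$, which contributes $+\tfrac{\mu_0}{\beta\hat{x}_1}\beta x_1 x_2\ge 0$ to $\dot{\tilde{V}}$ and is the \emph{only} genuinely dangerous term in this region. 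Handling it requires the region constraint in a different way than you use it: from $\tilde{x}_1<-\tfrac{\beta\hat{x}_1}{\mu_0}(\tilde{x}_2+\lambda_3\tilde{x}_3)\le -\tfrac{\beta\hat{x}_1}{\mu_0}\tilde{x}_2\le-\tfrac{\beta}{\mu_0}x_1x_2$ one gets $\mu_0\tilde{x}_1+\beta x_1 x_2<0$, which absorbs the bilinear term and leaves $\dot{\tilde{V}}\le -(\mu-\mu_0)\tilde{V}-\tfrac{\mu_0}{\beta\hat{x}_1}\tilde{u}$. This is exactly what the slope $\beta\hat{x}_1/\mu_0$ in the partition \eqref{eq:defVf} was designed for, and your proposal misses that design principle entirely by assuming the region degenerates to the negative $\tilde{x}_1$-axis.

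In short: the intended final inequality is reached, but two of your three branch derivations rest on false simplifications; fixing them requires the correct disease-free equilibrium relation in the middle branch and a correct reading of region $\bfC$ (plus the chain of inequalities controlling $\beta x_1x_2$) in the top branch.
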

\begin{proof} 
First, recall that $\hat{B}>0$ and \eqref{eq:R0bifurtoxf} imply 
$\hat{x}_1=\hat{B}/\mu>0$, $\hat{x}_2=\hat{x}_3=0$. Thus, 
$\tilde{x}_2=x_2\ge 0$, $\tilde{x}_3=x_3\ge 0$. 
Definition \eqref{eq:gam0} and 
conditions \eqref{eq:R0bifurtoxf} and \eqref{eq:epsi} yield 
$\hat{R}_0+\epsilon<1$, and $0< \gamma_0< \gamma$. Hence, 
$\lambda_3>0$ in \eqref{eq:lamsirf}. 
Define 
\begin{subequations}\label{eq:defABC}
\begin{align}
\bfA&:=\left\{ \tilde{x}\!\in\!\Rset\times\Rset_+^2 : 
0\le \tilde{x}_1\right\}
\label{eq:defABC_A}
\\
\bfB&:=\left\{ \tilde{x}\!\in\!\Rset\times\Rset_+^2 : 
-\dfrac{\beta\hat{x}_1}{\mu_0}(\tilde{x}_2\!+\!\lambda_3\tilde{x}_3)
\le \tilde{x}_1< 0\right\}
\label{eq:defABC_B}\\
\bfC&:=\left\{ \tilde{x}\!\in\!\Rset\times\Rset_+^2 : 
\tilde{x}_1<
-\dfrac{\beta\hat{x}_1}{\mu_0}(\tilde{x}_2\!+\!\lambda_3\tilde{x}_3)
\right\} . 
\label{eq:defABC_C}
\end{align}
\end{subequations}
The partitioning of 
\eqref{eq:defABC} clearly satisfies 
$\bfA\cup \bfB\cup \bfC=\Rset\times\Rset_+^2$. 
By definition \eqref{eq:defVf}, $\tilde{V}(\tilde{x})=0$ holds if and 
only if $\tilde{x}=0$ in $\Rset\times\Rset_+^2$. 
We have $\tilde{V}(\tilde{x})>0$ for all 
$\Rset\times\Rset_+^2\}\setminus\{0\}$ 
At $\tilde{x}_1=0$, the function $\tilde{V}$ is continuous and 
$\tilde{V}(\tilde{x}_1)=\tilde{x}_2+\lambda_3\tilde{x}_3$. 
It is also verified at the point 
$-{\beta\hat{x}_1}(\tilde{x}_2\!+\!\lambda_3\tilde{x}_3)/{\mu_0}$ that 
the function $\tilde{V}$ is continuous and 
$\tilde{V}(\tilde{x}_1)=\tilde{x}_2+\lambda_3\tilde{x}_3$. 
Hence, the function $\tilde{V}$ defined by \eqref{eq:defVf} is locally Lipschitz 
on $\Rset\times\Rset_+^2$. 
Now, we evaluate the derivative of $\tilde{V}(\tilde{x}(t))$ along the solution 
$\tilde{x}(t)$ of \eqref{eq:sir} region by 
region\footnote{
Since the derivative of $\tilde{V}$ is defined except on the boundaries between the regions, 
the derivative is computed except on those boundaries. 
Alternatively, if one considers the isolated segment of $\tilde{V}$ defined in 
each region separately, the derivative is defined on the boundaries. 
Both evaluations are valid since the upper bounds to be obtained are 
continuous in individual regions, and a common continuous upper bound will be 
derived later at \eqref{eq:DVABC}. Note that the common bound prevents each bound 
from approaching zero on the boundaries. 
This argument also applies to the proof of Theorem \ref{thm:sireLyap}. 
}.
In region $\bfA$, by virtue of 
$\hat{B} -\mu \hat{x}_1 -\beta \hat{x}_2\hat{x}_1
=\hat{B} -\mu \hat{x}_1=0$, from \eqref{eq:lamsirf} 
we obtain 
\begin{align}\label{eq:DVA}
\dfrac{\partial \tilde{V}}{\partial \tilde{x}}\tilde{f}
&= 
B-\mu S -\beta IS
+\beta IS -\gamma I-\mu I
+\lambda_3(\gamma I-\mu R)
\nonumber\\
&=\tilde{u}
-\mu\tilde{x}_1 -(\gamma_0+\mu)\tilde{x}_2 -\lambda_3\mu\tilde{x}_3  
\nonumber\\
&\le 
-\mu\tilde{V}(\tilde{x})+\tilde{u} . 
\end{align}
In region $\bfB$ we have 
\begin{align*}
\beta x_1<\beta \hat{x}_1=\frac{\beta \hat{B}}{\mu}=(\gamma+\mu)\hat{R}_0 . 
\end{align*}
Due to \eqref{eq:gam0}, in region $\bfB$, 
\begin{align}\label{eq:DVB}
\dfrac{\partial \tilde{V}}{\partial \tilde{x}}\tilde{f}
&=
\beta IS -\gamma I-\mu I
+\lambda_3(\gamma I-\mu R)
\nonumber\\
&=-(\gamma_0+\mu-\beta x_1) x_2-\lambda_3\mu x_3
\nonumber\\
&\le-\epsilon (\gamma_0+\mu)\tilde{x}_2-\lambda_3\mu\tilde{x}_3 
\nonumber\\
&\le 
-\underline{\epsilon}\tilde{V}(\tilde{x})
\end{align}
is obtained, where $\underline{\epsilon}:=\min\{\epsilon(\gamma_0+\mu),\mu\}$. 
In region $\bfC$, since the definition of $\bfC$ yields 
\begin{align*}
\tilde{x}_1<
-\frac{\beta\hat{x}_1}{\mu_0}(x_2+\lambda_3x_3)
\le
-\frac{\beta\hat{x}_1}{\mu_0}x_2
<
-\frac{\beta}{\mu_0}x_1x_2
\end{align*}
it is verified that 
\begin{align}\label{eq:DVC}
\dfrac{\partial \tilde{V}}{\partial \tilde{x}}\tilde{f}
&= 
-\frac{\mu_0}{\beta \hat{x}_1}
(B-\mu S -\beta SI)
\nonumber\\
&=
-\frac{\mu_0}{\beta \hat{x}_1}
(\tilde{u}-(\mu-\mu_0)\tilde{x}_1-\mu_0\tilde{x}_1 -\beta x_1x_2)
\nonumber\\
&<
\frac{\mu_0}{\beta \hat{x}_1}
\left(
(\mu-\mu_0)\tilde{x}_1-\tilde{u}
\right). 
\end{align}
Note that $\tilde{x}_1<0$ in $\bfC$.

Due to \eqref{eq:mu0}, 
combining \eqref{eq:DVA}, \eqref{eq:DVB} and \eqref{eq:DVC}, for 
an arbitrarily given $\delta\in(0,1)$, 
we obtain 
\begin{align}\label{eq:DVABC}
&
\tilde{V}(\tilde{x})\ge \frac{1}{\delta(\mu-\mu_0)}|\tilde{u}|
\ \Rightarrow\ 
\dfrac{\partial \tilde{V}}{\partial \tilde{x}}\tilde{f}
\le -(1-\delta)(\mu-\mu_0)\tilde{V}(\tilde{x})
\end{align}
for all $\tilde{x}\in \Rset\times\Rset_+^2$ and 
$\tilde{u}\in[-\hat{B},\infty)$ 
except on the boundaries between the regions. 
The continuity of involved functions and Rademacher's theorem allow 
\eqref{eq:DVABC} to hold for all $\tilde{x}\in \Rset\times\Rset_+^2$ by 
replacing the derivative with the lower Dini derivative. 
Equation \eqref{eq:sir} by itself guarantees 
$\tilde{x}(t)\in[-\hat{x}_1,\infty)\times\Rset_+^2$
for all $t\in\Rset_+$  
with respect to all $\tilde{x}(0)\in[-\hat{x}_1,\infty)\times\Rset_+^2$ 
and $\tilde{u}\in[-\hat{B},\infty)$.  
Therefore, all the claims are proved. 
\end{proof} 


Let the perturbed basic reproduction number $R_0(t)$ be 
defined with $B=\tilde{u}(t)+\hat{B}$, while 
the (nominal) basic reproduction number $\hat{R}_0$ has been 
defined with the nominal rate $\hat{B}$. 
If $\limsup_{t\to\infty}R_0(t)>1$, 
the state $x(t)$ does not converge to $x_f$
even for \eqref{eq:R0bifurtoxf}. 
In the same way, if $\liminf_{t\to\infty}R_0(t)<1 $ holds, 
the state $x$ does not converge to $x_e$ 
even for \eqref{eq:R0bifurtoxe} to be presented in the next section. 
The established ISS property does not override the mechanism of 
the basic reproduction number. 
Note that $\hat{R}_0=1$ holds if and only if $x_f=x_e$. 
The ISS property obtained in Theorem \ref{thm:sirf} not only guarantees 
the boundedness of $\tilde{x}(t)$ with respect to bounded $\tilde{u}(t)$, 
but also continuous variation of the bound with respect to the 
maximum magnitude of $\tilde{u}(t)$. 
Interestingly, the continuous transition holds true although 
the change of $\tilde{u}$ causes a bifurcation. 
The obtained property \eqref{eq:DVABC} together with definition \eqref{eq:defVf} 
establishes that the bound of the state variable $\tilde{x}$
is a linear function of the magnitude of the variation $\tilde{u}$. 
Figure \ref{fig:vf} illustrates level sets of the ISS Lyapunov function 
\eqref{eq:defVf} for $\tilde{V}=10$, $30$, $60$, $100$, $180$, $260$, 
$340$, ..., $500$. 
The parameters are 
$\beta=0.0002$, $\mu=0.015$, $\gamma=0.032$, $\hat{B}=3$ and 
$\mu_0=0.0149$, and they satisfy $\hat{R}_0 = 0.851<1$, 
\eqref{eq:mu0} and \eqref{eq:epsi} with 
$\epsilon=0.0745$ and $\mu_0 = 0.0148$.


\begin{remark}
The preceding study \cite{HIinfectd20iiss} demonstrated 
ISS of the SIR model \eqref{eq:sir} irrespective of the value $R_0$
by treating the entire amount $B$ as the input of the ISS property. 
It means that in \cite{HIinfectd20iiss}, the whole $R_0$ is a 
disturbance, and its nominal value is $\hat{R}_0=0$. 
Hence, the focused equilibrium was $\hat{x}=[0, 0 ,0]^T$ in 
\cite{HIinfectd20iiss}, instead of $x_f=[\hat{B}_0/\mu, 0 ,0]^T$. 
The ISS of \eqref{eq:sir} for $\hat{x}=0$ does not conclude 
that the state $x$ converges to 
the point $x_f=[\hat{B}_0/\mu, 0 ,0]^T$ when $B(t)\equiv \hat{B}$ 
and $\hat{R}_0<1$. 
The ISS property of \eqref{eq:sir} with respect to 
the non-zero equilibrium is not obvious from the ISS property 
of \eqref{eq:sir} with the zero equilibrium $x=\hat{x}=0$ either. 
\end{remark}

\begin{figure}[t]
\begin{center}
\includegraphics[width=8.5cm,height=5.2cm]{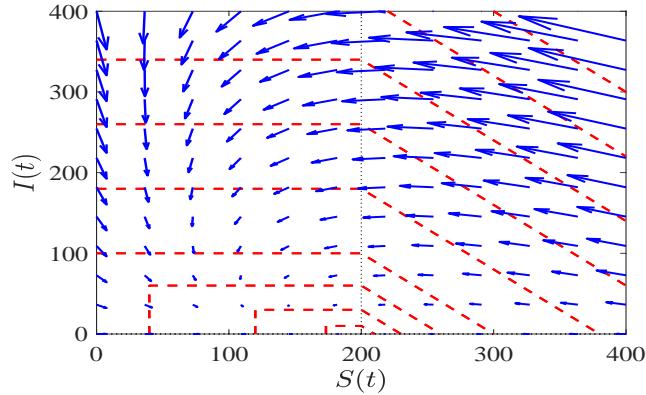}
\vspace{-1.2ex}
\end{center}
\caption{Level sets of the ISS Lyapunov function \eqref{eq:defVf} for the 
disease-free equilibrium with $\hat{B}= 3$ (Dash lines); 
$S$ and $I$ are $x_1$ and $x_2$, respectively; 
The arrows are segments of 
trajectories for $B(t)=\hat{B}$; The dotted line is $x_1=\hat{x}_1$.} 
\label{fig:vf}
\end{figure}

\section{Endemic Equilibrium}\label{sec:lyapxe}

This section constructs a Lyapunov function dealing with the endemic 
equilibrium $x_e$. For this purpose, we set $\hat{x}=x_e$. 
When the disease-free equilibrium $x_f$ was of interest, the component $x_2$ of 
the trajectories $x(t)$ of the SIR model \eqref{eq:sir} could not go below $\hat{x}_2$. 
Thus, the level contours of the Lyapunov function \eqref{eq:defVf} 
were sheared off at the plane of $x_2=\hat{x}_2$ in the three-dimensional space of $x$. 
Since $x_2$ can go below $\hat{x}_2$ for the endemic equilibrium $x_e$, 
an end of each level contour of the Lyapunov function \eqref{eq:defVf} needs to be 
placed more carefully at the plane of $x_2=0$ in order be able to connect the other 
end to form a loop. 
In addition to closing the contours, the influence of equation \eqref{eq:sirR} 
is not as simple as that in the case of the disease-free equilibrium. 
In fact, the endemic equilibrium also allows $x_3$ to be go below $\hat{x}_3$. 
The term of $x_2$ in \eqref{eq:sirR} needs to be taken care of 
depending on the sign of $\tilde{x}_2$ and $\tilde{x}_3$ 
to make the Lyapunov function decrease along the trajectory $x(t)$. 

Let $x_{1,2}(t)=[x_1(t), x_2(t)]^T$, and define 
\begin{align}
&
\Omega:=\left\{\tilde{x}\in[-\hat{x}_i,\infty)^3 : 
\tilde{x}_2\ne -\hat{x}_2
\right\}
\label{eq:defOmega}
\\
&
G_{1,2}:=\left\{\tilde{x}_{1,2}\in[-\hat{x}_i,\infty)^2 : 
\tilde{x}_1+\tilde{x}_2>-\hat{x}_2, \ \tilde{x}_2\ne -\hat{x}_2
\right\}
\label{eq:defG12}
\\
&
G:=G_{1,2}\times[-\hat{x}_3,\infty) .
\label{eq:defG}
\end{align}
The set $\Omega$ is the domain on which we want to establish stability 
properties. The situation $\tilde{x}_2=-\hat{x}_2$ is and must be removed from 
$\Omega$ since $x_f$ remains an equilibrium of the SIR model \eqref{eq:sir} 
independently of $\hat{B}$, i.e., $\hat{R}_0$. 
Indeed, the point $I=0$, i.e., $\tilde{x}_2=-\hat{x}_2$, remains an equilibrium
of \eqref{eq:sirI} irrespective of $x_1$ and $x_3$. 
The set $G$ is the domain on which a Lyapunov function is to be constructed. 
The following summarizes stability properties established in 
this section. 

\begin{theorem}\label{thm:sire}
Assume that $\hat{B}>0$ and 
\begin{align}
\hat{R}_0> \frac{\gamma}{\mu}+2 
\label{eq:R0bifurtoxe}
\end{align}
hold. Let $\hat{x}=x_e$. 
Then the endemic equilibrium $\tilde{x}=0$ of the SIR model \eqref{eq:sir} 
is asymptotically stable, and any compact subset in $\Omega$ belongs to
the domain of attraction. 
Furthermore, for an arbitrarily given compact set $\underline{G}$ 
contained in the interior of $G$, 
there exists a compact set 
$\overline{G}\supset\underline{G}$ such that 
the SIR model \eqref{eq:sir} is ISS on $\overline{G}$ 
with respect to the newborn/immigration rate perturbation $\tilde{u}$ satisfying 
\begin{align}
\forall t\in\Rset_+\hspace{1.5ex}
\tilde{u}(t)\in [-\hat{B},\infty)
\label{eq:issuall}
\end{align}
\end{theorem}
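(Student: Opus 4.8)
The plan is to follow the recipe behind Theorem~\ref{thm:sirf}: construct an explicit, piecewise-linear, locally Lipschitz, positive-definite function $\tilde{V}$ with domain $G$ (see \eqref{eq:defG12}, \eqref{eq:defG}) that is an ISS Lyapunov function on suitable sublevel sets sitting in $G$, and then read off asymptotic stability and ISS from its strictly negative Dini derivative via the sublevel-set criteria of Section~\ref{sec:symbols} — the implication \eqref{eq:ISSLyapDeni} restricted to such a set, which yields \eqref{eq:defISS}. First I would pass to the error form \eqref{eq:sir_xformtil} and record the equilibrium identities packed into \eqref{eq:xe}, namely $\beta\hat{x}_1=\gamma+\mu$, $\beta\hat{x}_2=\mu(\hat{R}_0-1)$, $\gamma\hat{x}_2=\mu\hat{x}_3$ and $\hat{B}=\mu\hat{x}_1+\beta\hat{x}_1\hat{x}_2$; with these the dynamics become
\begin{align*}
\dot{\tilde{x}}_1&=\tilde{u}-\mu\hat{R}_0\tilde{x}_1-(\gamma+\mu)\tilde{x}_2-\beta\tilde{x}_1\tilde{x}_2,\\
\dot{\tilde{x}}_2&=\bigl(\mu(\hat{R}_0-1)+\beta\tilde{x}_2\bigr)\tilde{x}_1,\qquad
\dot{\tilde{x}}_3=\gamma\tilde{x}_2-\mu\tilde{x}_3 .
\end{align*}
The decisive structural fact — exactly as in \eqref{eq:DVA} — is that the bilinear term $\beta\tilde{x}_1\tilde{x}_2$ cancels in $\dot{\tilde{x}}_1+\dot{\tilde{x}}_2$, so the linear block $\tilde{x}_1+\tilde{x}_2+\lambda_3\tilde{x}_3$ with $\lambda_3$ as in \eqref{eq:lamsirf} (and $\epsilon,\gamma_0$ as in \eqref{eq:epsi}, \eqref{eq:gam0}) is the natural form of $\tilde{V}$ wherever $\tilde{x}_1\ge 0$, where it gives $\dot{\tilde{V}}=\tilde{u}-\mu\tilde{x}_1-(\mu+\gamma_0)\tilde{x}_2-\lambda_3\mu\tilde{x}_3$.

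Next I would partition $G$ into finitely many convex polyhedral cells, by sign conditions on a handful of linear functionals of $\tilde{x}$ that generalize $\bfA,\bfB,\bfC$ of the disease-free case, so that: (i) the cells separate $\tilde{x}_1>0$ from $\tilde{x}_1<0$, where the active contracting mechanism switches between the $S$- and the $I$-channel; (ii) they split on the signs of $\tilde{x}_2$ and of $\tilde{x}_2+\lambda_3\tilde{x}_3$, which decides how to sign the $\gamma I$ contribution of \eqref{eq:sir} so that the derivative stays negative (the complication flagged before the theorem, arising because $\tilde{x}_2$ and $\tilde{x}_3$ may now be negative); and (iii) they include a ``wrap-around'' cell adjacent to the face $\tilde{x}_1+\tilde{x}_2=-\hat{x}_2$, in which $\tilde{V}$ is shaped (a steep piece in the $\tilde{x}_1$-direction) so that the level sets close into loops bounded away from the excised plane $\tilde{x}_2=-\hat{x}_2$ — this is why the working domain is $G$ rather than $[-\hat{x}_i,\infty)^3$ and why $\Omega$ of \eqref{eq:defOmega} removes that plane. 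On each cell I would set $\tilde{V}$ equal to the corresponding linear functional, the slopes being forced by agreement across shared faces (hence local Lipschitzness on $G$), by positivity on $G\setminus\{0\}$, and by the requirement that the sublevel sets $\{\tilde{V}\le L\}$ be compact subsets of $\mathrm{int}\,G$ exhausting $G$ as $L\to\infty$. Then, computing $\partial\tilde{V}/\partial\tilde{x}\cdot\tilde{f}$ cell by cell as in \eqref{eq:DVA}--\eqref{eq:DVC}, using the cancellation above together with the hypothesis \eqref{eq:R0bifurtoxe} — which is exactly $\hat{x}_2>\hat{x}_1$ — to produce the required sign margins, I would bound each piece by $-c\,\tilde{V}(\tilde{x})+\kappa|\tilde{u}|$ for common constants $c,\kappa>0$ over $\tilde{u}\in[-\hat{B},\infty)$; as in the footnote to the proof of Theorem~\ref{thm:sirf}, continuity of the cell bounds together with Rademacher's theorem passes this to the lower Dini derivative and gives, as in \eqref{eq:DVABC}, the implication \eqref{eq:ISSLyapDeni} on $G$ for suitable $\chi\in\calK$ and $\alpha\in\calP$.

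With this $\tilde{V}$ I would then harvest the two conclusions. For ISS: given a compact $\underline{G}\subset\mathrm{int}\,G$, pick $L>0$ with $\underline{G}\subset\{\tilde{x}:\tilde{V}(\tilde{x})\le L\}=:\overline{G}$, $\overline{G}$ compact and contained in $\mathrm{int}\,G$; for any admissible $\tilde{u}$ the cell-wise bound gives $D^+\tilde{V}<0$ on $\partial\overline{G}$, so $\overline{G}$ is forward invariant, solutions from it never reach $\partial G$, and the comparison argument of Section~\ref{sec:symbols} converts \eqref{eq:ISSLyapDeni} into \eqref{eq:defISS}, i.e.\ ISS on $\overline{G}$ with respect to \eqref{eq:issuall}. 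For asymptotic stability with domain of attraction $\Omega$ (the case $\tilde{u}=0$): on $G$, $\tilde{V}$ is a strict Lyapunov function, so every solution starting in a compact subset of $G$ lies in a forward-invariant $\{\tilde{V}\le L\}$ and converges to $0$; on the rest of $\Omega$, where $\tilde{x}_1+\tilde{x}_2\le-\hat{x}_2$, i.e.\ $S+I\le\hat{x}_1$, a separate trapping estimate applies — there $\tfrac{d}{dt}(S+I)=\hat{B}-\mu S-(\gamma+\mu)I\ge\hat{B}-(\gamma+\mu)\hat{x}_1>0$ by \eqref{eq:R0bifurtoxe}, and since $I=0$ is invariant, such a solution enters $G$ in finite time, after which the Lyapunov argument takes over.

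The step I expect to be hardest is the cell design and verification in the middle paragraph: fixing the cell boundaries and the wrap-around slope so that $D^+\tilde{V}<0$ holds simultaneously in every cell — in particular near $\{I=0\}$ and in the cells where $\tilde{x}_3<0$ — while still closing the level sets into loops interior to $G$ and keeping $\tilde{V}$ continuous, positive and exhausting. It is precisely here that the restrictive hypothesis $\hat{R}_0>\gamma/\mu+2$, equivalently $\hat{x}_2>\hat{x}_1$, is forced in order to supply the needed margins. A secondary difficulty is the finite-time-entry estimate for $\Omega\setminus G$, together with the gain bookkeeping that matches $\overline{G}$ to a prescribed $\underline{G}$ within the input class \eqref{eq:issuall}.
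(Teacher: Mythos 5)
Your setup is correct: the error dynamics, the equilibrium identities, the observation that the bilinear term cancels in $\dot{\tilde{x}}_1+\dot{\tilde{x}}_2$, the identification of \eqref{eq:R0bifurtoxe} with $\hat{x}_2>\hat{x}_1$, and the finite-time-entry estimate on $\Omega\setminus G$ (where $\tfrac{d}{dt}(S+I)\ge \hat B-(\gamma+\mu)\hat x_1=\tfrac{\gamma+\mu}{\beta}\bigl(\mu\hat R_0-(\gamma+\mu)\bigr)>0$ once $\hat R_0>\gamma/\mu+1$) are all right, and the two-step architecture ``Lyapunov on a sublevel set inside $G$, then trapping from $\Omega\setminus G$'' is exactly the structure of Section~\ref{ssec:thmsire}. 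However, the central construction as you describe it cannot work, and this is not a detail.

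You propose \emph{one} piecewise-linear, finite-cell function $\tilde V$ on $G$ with linear cell boundaries, whose sublevel sets $\{\tilde V\le L\}$ exhaust $G$ as $L\to\infty$. That is impossible. The plane $\tilde x_2=-\hat x_2$ (i.e.\ $I=0$) is forward invariant and carries trajectories converging to $x_f\ne x_e$, so no strict Lyapunov function for $x_e$ can have negative derivative there; consequently any sublevel set that is a domain-of-attraction estimate for $x_e$ must be bounded away from $\tilde x_2=-\hat x_2$. For sublevel sets of a \emph{single} function to exhaust $G$ (which excludes that plane) the function would have to blow up as $\tilde x_2\to -\hat x_2$, which a finite piecewise-linear function cannot do. The paper resolves this in two linked ways you do not have: (a) it uses a genuinely nonlinear piece $P^{-1}$ built from $\theta^{-1}$ in \eqref{eq:defthetainv}, whose derivative blows up as in \eqref{eq:derPinvlim}; and (b) it does not use one fixed $\tilde V$ but a three-parameter \emph{family} $\tilde V(\cdot\,;\hat\lambda_2,k,\overline L)$, with Lemma~\ref{lem:sublevel} proving that by shrinking $\hat\lambda_2,k$ and growing $\overline L$ (these are coupled via \eqref{eq:TLcorneredgeout}, \eqref{eq:ksire}, \eqref{eq:lam3sire}) the sublevel sets \eqref{eq:sublevelset} sandwich any compact $\underline G\subset\mathrm{int}\,G$. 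That lemma is the key logical bridge between Theorem~\ref{thm:sireLyap} (the ISS Lyapunov function on one sublevel set) and Theorem~\ref{thm:sire} (attraction from arbitrary compacta in $\Omega$), and your outline has no substitute for it. Relatedly, the cell boundaries in \eqref{eq:defVe12} are not linear either — they involve $\nu(\tilde x_2)$ and $\theta^{-1}(-\tilde x_2)$ — which is forced by the need to glue the nonlinear $P^{-1}$ pieces continuously onto the linear ones while keeping the derivative sign. Your ``steep piece in the $\tilde x_1$-direction'' near $\tilde x_1+\tilde x_2=-\hat x_2$ addresses a different (and less stringent) boundary than the one that actually obstructs piecewise linearity.

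A smaller point: your trapping functional $S+I$ only forces $\tilde x_1+\tilde x_2$ up; to push the state into a fixed compact $\overline G(\hat\lambda_2,k,\overline L)$ you also need $\tilde x_3$ under control, which the paper gets by using $W(\tilde x)=-\tilde x_1-\tilde x_2+|\tilde x_3|$ together with the $N$-bound \eqref{eq:Nest}. This is repairable in your scheme (pair $S+I$ with \eqref{eq:Nest} and ISS of the $R$-equation), but it should be made explicit.
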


The above theorem is established by the construction of the following 
ISS Lyapunov function. 

\begin{theorem}\label{thm:sireLyap}
Assume that $\hat{B}>0$ and \eqref{eq:R0bifurtoxe} are satisfied. 
Define the function $V$ by 
\begin{align}
\tilde{V}(\tilde{x})=\tilde{V}_{1,2}(\tilde{x}_{1,2})+
\tilde{V}_3(\tilde{x}_3) 
\label{eq:defVe}
\end{align}
with 
\begin{align}
&
\tilde{V}_{1,2}(\tilde{x}_{1,2})=
\left\{\begin{array}{ll}
P^{-1}\left(-\lambda_1\tilde{x}_1+\hat{\lambda}_2\tilde{x}_2\right), 
&
\mbox{\small$0\le\tilde{x}_2,  \ 
\tilde{x}_1< \nu(\tilde{x}_2)
$}
\\
(\lambda_2-k\lambda_1)\tilde{x}_2,
&
\mbox{\small$0\le\tilde{x}_2,  \ 
\nu(\tilde{x}_2)
\le \tilde{x}_1< -k\tilde{x}_2
$}
\\
\lambda_1\tilde{x}_1+\lambda_2\tilde{x}_2, 
&
\mbox{\small$0\le\tilde{x}_2,  \ 
 -k\tilde{x}_2 \le \tilde{x}_1
$}
\\
P^{-1}\left(-\lambda_1\tilde{x}_1-\lambda_2\tilde{x}_2\right),     
&
\mbox{\small$\tilde{x}_2<0,  \ 
\tilde{x}_1\le -k\tilde{x}_2
$}
\\
P^{-1}\left((k\lambda_1-\lambda_2)\tilde{x}_2\right),     
&
\mbox{\small$\tilde{x}_2<0,  \ 
-k\tilde{x}_2<\tilde{x}_1 \le 
\theta^{-1}(-\tilde{x}_2)
$}
\\
\lambda_1\tilde{x}_1-\hat{\lambda}_2\tilde{x}_2, 
&
\mbox{\small$\tilde{x}_2<0,  \ 
\theta^{-1}(-\tilde{x}_2)<\tilde{x}_1
$}
\end{array}\right.
\label{eq:defVe12}
\\
&
\tilde{V}_3(\tilde{x}_3)=\lambda_3|\tilde{x}_3|
\label{eq:defVe3}
\end{align}
and
\begin{align}
&
\theta(s)=\hat{x}_2-\frac{\hat{x}_1\hat{x}_2}{\hat{x}_1+s}
, \quad s\in(-\hat{x}_1,\infty)
\label{eq:deftheta}
\\
&
\theta^{-1}(s)=\frac{\hat{x}_1\hat{x}_2}{\hat{x}_2-s}-\hat{x}_1
, \quad s\in(-\infty,\hat{x}_2)
\label{eq:defthetainv}
\\
&
0<\lambda_1=\lambda_2
\label{eq:lam12sire}
\\
&
0<k<\min\left\{
1-\frac{\gamma+\mu}{\mu(\hat{R}_0-1)},\, 
\frac{\lambda_2\theta^{-1}(-\overline{L}/\lambda_2)}{\lambda_1\theta^{-1}(-\overline{L}/\lambda_2)-\overline{L}}  
\right\}=k_0
\label{eq:ksire}
\\
&
0<\lambda_3<\min\biggl\{
\frac{k\mu\lambda_1(\hat{R}_0-1)(1-k)}{\gamma}, \ 
\frac{\hat{\lambda}_2^2}{(1-k)\lambda_1}, \, 
\nonumber\\
&\hspace{23.5ex}
\frac{\beta\hat{\lambda}_2
(\hat{x}_2-\theta\circ\omega^{-1}(\overline{L}))}{\gamma}, \, 
\hat{\lambda}_2
\biggr\}
\label{eq:lam3sire}
\\
&
\omega(s)=\lambda_1s+\hat{\lambda}_2\theta(s)
, \quad s\in(-\hat{x}_1,\infty)
\label{eq:defomega}
\\
&
P(s)=(\lambda_2-k\lambda_1)\theta\circ \omega^{-1}(s)
, \quad s\in\Rset
\label{eq:defP}
\\
&
P^{-1}(s)=\lambda_1\theta^{-1}\left(\dfrac{s}{\lambda_2-k\lambda_1}\right) 
+\dfrac{\hat{\lambda}_2s}{\lambda_2-k\lambda_1}, 
\quad 
s\in(-\infty,(\lambda_2-k\lambda_1)\hat{x}_2)
\label{eq:defPinv}
\\
&
\nu(s)=
\dfrac{1}{\lambda_1}
\left(\hat{\lambda}_2s-P((\lambda_2-k\lambda_1)s)\right)
, \quad s\in\Rset
\label{eq:defnu}
\end{align}
for $\overline{L}>0$ and $\hat{\lambda}_2>0$. 
If $\overline{L}>0$ and $\hat{\lambda}_2>0$ satisfy 
\begin{align}
\forall L\in[0,\overline{L}]\hspace{1.5ex}
\nu\left(\dfrac{L}{\lambda_2-k\lambda_1}\right)
\le \theta^{-1}\!\left(-\dfrac{L}{\lambda_2-k\lambda_1}\right) , 
\label{eq:TLcorneredgeout}
\end{align}
the function $\tilde{V}$ is locally Lipschitz on the set 
\begin{align}
H(\hat{\lambda}_2,k,\overline{L}):=\left\{ \tilde{x}\in\Rset^3 :\hspace{-3ex} 
\begin{array}{c}
-\hat{x}_2<\tilde{x}_2\le 
\dfrac{\overline{L}}{\lambda_2(1-k)}, \\
-\tilde{x}_1-\tilde{x}_2<(1-k)\hat{x}_2, \\
-\lambda_1\tilde{x}_1+\hat{\lambda}_2\tilde{x}_2<\lambda_2(1-k)\hat{x}_2
\end{array}
\hspace{-1ex}\right\}, 
\label{eq:defH}
\end{align}
and the function $\tilde{V}$ is an ISS Lyapunov function on 
\begin{align}
\overline{G}(\hat{\lambda}_2,k,\overline{L}):= \left\{\tilde{x}\in[-\hat{x}_i,\infty)^3 : 
\tilde{V}(\tilde{x})\le \overline{L} \right\}
\label{eq:sublevelset}
\end{align}
with respect to the input $\tilde{u}$ satisfying 
\begin{align}
\forall t\in\Rset_+\hspace{1.5ex}
\tilde{u}(t)\in 
\left(-\delta\mu\frac{P(\overline{L})}{\lambda_1}, \frac{\delta\mu\overline{L}}{\lambda_1}\right)  
\label{eq:issuL}
\end{align}
for an arbitrarily given $\delta\in(0,1)$. 
\end{theorem}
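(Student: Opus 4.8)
The plan is to follow the same region-by-region strategy that succeeded for Theorem~\ref{thm:sirf}, but adapted to the more intricate six-piece partition of $\tilde{V}_{1,2}$ together with the $|\tilde{x}_3|$ term. First I would record the basic algebraic consequences of the bifurcation condition~\eqref{eq:R0bifurtoxe} and the parameter choices: that $\hat{x}_2,\hat{x}_3>0$, that $k_0>0$ in~\eqref{eq:ksire} (using $\hat{R}_0-1>\gamma/\mu+1$ so $1-(\gamma+\mu)/(\mu(\hat{R}_0-1))>0$), that the upper bounds defining $\lambda_3$ in~\eqref{eq:lam3sire} are strictly positive, and that $\theta$ is a strictly increasing concave bijection from $(-\hat{x}_1,\infty)$ onto $(-\infty,\hat{x}_2)$ with the stated inverse, so that $\omega$, $P$, $P^{-1}$, $\nu$ are well defined on the indicated domains. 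The geometric role of each piece should be spelled out: the three pieces for $\tilde{x}_2\ge 0$ mirror the $\bfA,\bfB,\bfC$ decomposition of Theorem~\ref{thm:sirf} (the "$P^{-1}(\cdot)$" piece being the analogue of region $\bfC$ where one rides on the $S$-dynamics), while the three pieces for $\tilde{x}_2<0$ are the new ingredient forced by $x_2$ being able to dip below $\hat{x}_2$; the curve $\tilde{x}_1=\theta^{-1}(-\tilde{x}_2)$ is exactly the level set of $x_1x_2=\hat{x}_1\hat{x}_2$, i.e. where $\beta x_1 x_2 = \beta\hat{x}_1\hat{x}_2 = (\gamma+\mu)\hat{x}_2$ makes $\dot I$ change the sign of its "production minus removal" balance.

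Second, I would verify the \emph{Lipschitz / continuous matching} claim on $H(\hat\lambda_2,k,\overline L)$. This means checking that across each of the five interior interfaces of~\eqref{eq:defVe12} the two adjacent formulas agree. The interfaces $\tilde{x}_1=-k\tilde{x}_2$ (both for $\tilde{x}_2\ge0$ and $\tilde{x}_2<0$), $\tilde{x}_2=0$, $\tilde{x}_1=\nu(\tilde{x}_2)$, and $\tilde{x}_1=\theta^{-1}(-\tilde{x}_2)$ each give an identity that follows from the definitions of $P$, $P^{-1}$, $\nu$, $\omega$; the hypothesis~\eqref{eq:TLcorneredgeout} is precisely what guarantees that the "corner" where the $\nu$-interface would meet the $\theta^{-1}$-interface is ordered correctly so that the six pieces tile a neighborhood of each point of $H$ without overlap or gap. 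I would also note $\tilde{V}(\tilde{x})\ge 0$ with equality iff $\tilde{x}=0$, and that $\overline{G}(\hat\lambda_2,k,\overline L)$ as defined in~\eqref{eq:sublevelset} is contained in $H(\hat\lambda_2,k,\overline L)$ — the constraints in~\eqref{eq:defH} are built from the sublevel condition $\tilde{V}\le\overline L$ read off in the relevant pieces — so that $\tilde{V}$ is locally Lipschitz on a neighborhood of $\overline G$, which is what one needs to make the Dini-derivative machinery of the Preliminaries applicable.

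Third comes the core computation: bounding $(\partial\tilde V/\partial\tilde x)\tilde f$ (equivalently $D^+\tilde V$) piece by piece, using $\hat B-\mu\hat x_1-\beta\hat x_2\hat x_1=0$, $\beta\hat x_1=\gamma+\mu$, and $\gamma\hat x_2=\mu\hat x_3$ to rewrite $\tilde f$ in the shifted coordinates. In the three "$\tilde{x}_2\ge0$" pieces I expect the same pattern as~\eqref{eq:DVA}--\eqref{eq:DVC}: in the "$\lambda_1\tilde x_1+\lambda_2\tilde x_2$" piece the $\beta IS$ terms cancel and one is left with $\tilde u$ minus a positive-definite form (using $\lambda_1=\lambda_2$ and the $\lambda_3$-bound to absorb the $\gamma I$ feeding $\dot R$); in the middle piece only $\dot I,\dot R$ enter and $\beta x_1\le(\gamma+\mu)(1+k\cdot(\text{something}))$-type estimates from $\tilde x_1<-k\tilde x_2$ plus the first bound in~\eqref{eq:ksire} give strict decrease; in the "$P^{-1}(\cdot)$" piece one rides the $S$-equation, uses $\tilde x_1<\nu(\tilde x_2)$ to show $\beta x_1 x_2$ dominates, and picks up the benign term $(\mu-\mu_0)\tilde x_1$-analogue — here the role of the concave change of variables $P$ is to make $D^+\tilde V$ along $\tilde x_1$ come out as a clean multiple of $\dot S$ minus an input term. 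The genuinely new work is the three "$\tilde{x}_2<0$" pieces, where I would exploit that $\tilde x_2<0$ means $x_2<\hat x_2$ so the removal term $\gamma I+\mu I$ in $\dot I$ dominates unless $x_1$ is large, the region boundary $\tilde x_1\le\theta^{-1}(-\tilde x_2)$ keeping $\beta x_1 x_2\le(\gamma+\mu)\hat x_2$; the $\lambda_3|\tilde x_3|$ term contributes $\pm\lambda_3(\gamma\tilde x_2-...)$ depending on $\mathrm{sgn}(\tilde x_3)$, and the last two bounds in~\eqref{eq:lam3sire} are exactly what is needed to keep the resulting bound strictly negative in all four sign combinations of $(\tilde x_2,\tilde x_3)$.

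Finally I would combine the six piecewise bounds into a single implication of the ISS-Lyapunov form~\eqref{eq:ISSLyapDeni}: there are $c>0$ and $\chi\in\calK$ with $\tilde V(\tilde x)\ge\chi(|\tilde u|)\Rightarrow D^+\tilde V\le -c\,\tilde V$, valid for all $\tilde x\in\overline G(\hat\lambda_2,k,\overline L)$ (using Rademacher plus the continuity remark in the footnote to cover the interfaces, and using that $\overline G\subset H$ so the derivative estimates hold throughout), and for all $\tilde u$ in the range~\eqref{eq:issuL} — the asymmetric interval there arising because the "good" input term has coefficient $\mu/\lambda_1$ against $\tilde V$ which ranges up to $\overline L$ on one side and $P(\overline L)$ on the other, and the factor $\delta$ is the usual slack taken from the decrease rate. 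I would then invoke the fact from Section~\ref{sec:symbols} (sublevel-set version, for systems with $\Rset$ replaced by $\Rset_+$ in the nonnegative variables) that $D^+\tilde V\le -c\tilde V$ on a sublevel set forward-invariant under the dynamics yields ISS on that set; forward invariance of $\overline G$ is itself a consequence of the decrease estimate together with the input range~\eqref{eq:issuL}, exactly as in the last paragraph of the proof of Theorem~\ref{thm:sirf}. I expect the main obstacle to be bookkeeping: correctly tracking which of the several upper bounds in~\eqref{eq:ksire} and~\eqref{eq:lam3sire} is invoked in which region, and verifying that the constant $c$ one extracts is uniform over all six pieces — in particular that the estimates do not degenerate to $0$ near the interfaces, which is handled by taking a common continuous majorant exactly as the footnote after~\eqref{eq:defABC} indicates.
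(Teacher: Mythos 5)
Your proposal follows essentially the same route as the paper's proof: verify positivity and continuity of $\tilde V$ across the interfaces of the six-piece partition (invoking \eqref{eq:TLcorneredgeout} to keep the $\nu$- and $\theta^{-1}$-curves correctly ordered), compute $\partial\tilde V/\partial\tilde x\cdot\tilde f$ region by region split further by the sign of $\tilde x_3$, select which bound in \eqref{eq:ksire} and \eqref{eq:lam3sire} is active in each region, and then package the estimates into a Dini-derivative ISS implication with the asymmetric input range \eqref{eq:issuL} coming from the different scales $\overline L$ and $P(\overline L)$ of $\tilde V$ and of the inner argument of $P^{-1}$. The paper delegates the continuity/Lipschitz bookkeeping to Lemma~\ref{lem:sublevel} and additionally uses the blow-up of $(P^{-1})'$ near $(\lambda_2-k\lambda_1)\hat x_2$ to extract class-$\mathcal{K}_\infty$ decrease rates in the $P^{-1}$ regions, but these are presentational details rather than a different argument.
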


The parameter $\hat{\lambda}_2>0$ introduced in \eqref{eq:defVe12} 
copes with the both-sided variables $\tilde{x}_2$ and $\tilde{x}_3$. 
The following lemma shows that the sublevel sets of 
the ISS Lyapunov function $\tilde{V}$ can always 
cover\footnote{cover any bounded sets in}
the set $G$ entirely 
as $\overline{L}\to \infty$ and $\hat{\lambda}_2\to 0$, which is the key to 
the establishment of Theorem \ref{thm:sire} from 
Theorem \ref{thm:sireLyap}. In fact, it forms 
a central and unique idea of this paper. 

\begin{lemma}\label{lem:sublevel}
Assume that \eqref{eq:R0bifurtoxe} is satisfied. Suppose that 
\eqref{eq:defVe}-\eqref{eq:defnu} and \eqref{eq:defH}-\eqref{eq:sublevelset} 
are defined and given. Then the following hold true: 
\\*
\textbf{(i)\ }
For any compact set $\underline{G}$ contained in the interior of $G$, 
there exist $\overline{\lambda}_2>0$, $\overline{L}\ge 0$ and 
$\overline{k}\in(0,k_0)$ such that 
\eqref{eq:TLcorneredgeout} and 
\begin{align}
\underline{G}
\subset
\overline{G}(\hat{\lambda}_2,k,\overline{L}) 
\subset
G 
\label{eq:sublevelsetglobal}
\end{align}
are satisfied 
for all $\hat{\lambda}_2\in(0,\overline{\lambda}_2]$ and 
all $k\in(0,\overline{k}]$. 
\\*
\textbf{(ii)\ }
For each $k$ satisfying \eqref{eq:ksire}, 
\begin{align}
0 \le a \le b\ \Rightarrow\ 
\overline{G}(b,k,L)\subset 
\overline{G}(a,k,L)
\label{eq:GLorderinhatlam2}
\end{align}
holds for all $L\in[0,\overline{L}]$ 
if \eqref{eq:TLcorneredgeout} holds. 
\\*
\textbf{(iii)\ }
For each $\hat{\lambda}_2\ge 0$, 
\begin{align}
&
0 \le a \le b\ \Rightarrow\ 
\overline{G}(\hat{\lambda}_2,b,L)
\cap\left\{\!\tilde{x}_2\!\le\!\frac{L}{\lambda_2}\!\right\}\subset 
\overline{G}(\hat{\lambda}_2,a,L)\cap\left\{\!\tilde{x}_2\!\le\!\frac{L}{\lambda_2}\!\right\}
\label{eq:GLorderink}
\end{align}
holds for all $L\in[0,\overline{L}]$ 
if \eqref{eq:TLcorneredgeout} holds. 
\\*
\textbf{(iv)\ }
Property \eqref{eq:TLcorneredgeout} holds for any 
all $\overline{L}\in\Rset_+$ if $\overline{\lambda}_2=0$. 
\end{lemma}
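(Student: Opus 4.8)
The plan is to establish the three ordering statements (ii)--(iv) first, since they are essentially algebraic facts about the piecewise‑linear function $\tilde V_{1,2}$, and then to deduce the geometric statement (i) by a limiting argument that couples all three.

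I would begin with (iv), because it is both the most computational item and the foundation for (i). Setting $\hat\lambda_2=0$ the auxiliary maps collapse: $\omega(s)=\lambda_1 s$, $P(s)=(\lambda_2-k\lambda_1)\theta(s/\lambda_1)$, $P^{-1}(s)=\lambda_1\theta^{-1}(s/(\lambda_2-k\lambda_1))$, and hence by \eqref{eq:defnu}, $\nu(s)=-\tfrac{\lambda_2-k\lambda_1}{\lambda_1}\,\theta\big(\tfrac{(\lambda_2-k\lambda_1)s}{\lambda_1}\big)$. Using $\lambda_1=\lambda_2$ together with the explicit forms $\theta(s)=\hat x_2 s/(\hat x_1+s)$ and $\theta^{-1}(s)=\hat x_1 s/(\hat x_2-s)$, the inequality \eqref{eq:TLcorneredgeout} reduces, after clearing the (positive) denominators, to $\lambda_1(1-k)^2\hat x_2^2+(1-k)\hat x_2 L\ge \lambda_1\hat x_1^2+\hat x_1 L$ for all $L\ge 0$; this holds for every $\overline L$ exactly when $(1-k)\hat x_2\ge\hat x_1$, i.e. $k\le 1-\tfrac{\gamma+\mu}{\mu(\hat R_0-1)}$, which is the first bound defining $k_0$ in \eqref{eq:ksire} and is where \eqref{eq:R0bifurtoxe} (equivalently $\hat x_1<\hat x_2$) enters. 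Since $k<k_0$ this is strict for $L>0$, and this computation is precisely statement (iv). Moreover, by continuity of $(\hat\lambda_2,L)\mapsto\theta^{-1}(-L/(\lambda_2-k\lambda_1))-\nu(L/(\lambda_2-k\lambda_1))$ on $[0,\infty)\times[0,\overline L]$ together with compactness, this strictness at $\hat\lambda_2=0$ yields, for any prescribed $\overline L$, a threshold $\overline\lambda_2>0$ (uniform over $k\in(0,\overline k]$) on which \eqref{eq:TLcorneredgeout} survives; that is the $\overline\lambda_2$ used in (i).

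Next I would prove (ii) and (iii). Since $\tilde V=\tilde V_{1,2}+\lambda_3|\tilde x_3|$ with $\lambda_3$ fixed, nesting of $\overline G(\hat\lambda_2,k,L)$ follows once the planar sublevel sets $\{\tilde x_{1,2}:\tilde V_{1,2}(\tilde x_{1,2})\le L'\}$ are shown to be nested as claimed for every $L'\in[0,L]$. I would check this branch by branch on \eqref{eq:defVe12}: on the branches not containing $\hat\lambda_2$ the value is constant in $\hat\lambda_2$; on branch six it is visibly increasing in $\hat\lambda_2$ because $\tilde x_2<0$ there; and on the $P^{-1}$‑branches one verifies that the argument of $P^{-1}$ is positive on the relevant region, so that both the argument and $P^{-1}$ are increasing in $\hat\lambda_2$. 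Continuity of $\tilde V_{1,2}$ across the interior seams (the very thing \eqref{eq:TLcorneredgeout} guarantees) then upgrades branchwise monotonicity to the global inclusion \eqref{eq:GLorderinhatlam2}. The argument for \eqref{eq:GLorderink} is parallel, the only new point being that near the top the seam $\tilde x_2=L/(\lambda_2(1-k))$ itself moves with $k$, which is exactly why that inclusion must be intersected with $\{\tilde x_2\le L/\lambda_2\}$.

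Finally (i). Given compact $\underline G\subset\mathrm{int}\,G$, first fix $\overline L$ depending only on $\underline G$ and $\lambda_2$ so that $\underline G\subset\{\tilde x_2\le\overline L/\lambda_2\}$ and $\tilde V^{(\hat\lambda_2,k)}$ is finite and $\le\overline L$ on $\underline G$ for all small $\hat\lambda_2,k$; this is legitimate because, as $\hat\lambda_2,k\to0$, the defining inequalities of $H(\hat\lambda_2,k,\overline L)$ (with the $\tilde x_2$‑cap ignored) converge onto those of $G$, so $\tilde V^{(\hat\lambda_2,k)}$ is continuous near $\underline G$ with values tending to the finite limit $\tilde V^{(0,0)}$. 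Then take $\overline k<k_0(\overline L)$ and, from (iv) plus the continuity above, $\overline\lambda_2>0$ small enough that \eqref{eq:TLcorneredgeout} holds and $\underline G\subset\overline G(\overline\lambda_2,\overline k,\overline L)\cap\{\tilde x_2\le\overline L/\lambda_2\}$. The inner inclusion in \eqref{eq:sublevelsetglobal} for all $\hat\lambda_2\in(0,\overline\lambda_2]$, $k\in(0,\overline k]$ then follows by enlarging first through \eqref{eq:GLorderink} (valid since we stayed inside $\{\tilde x_2\le\overline L/\lambda_2\}$) and then through \eqref{eq:GLorderinhatlam2}. For the outer inclusion I would show $\{\tilde V\le\overline L\}\subset H(\hat\lambda_2,k,\overline L)$: on each branch, $\tilde V\le\overline L$ forces the corresponding linear form or $P^{-1}$‑argument below its bound, which is exactly one of the three inequalities of \eqref{eq:defH} (using $\theta(\cdot)<\hat x_2$ for the third), while on the left branches $\tilde V_{1,2}\to+\infty$ as $\tilde x_2\to-\hat x_2^{+}$ because $\theta^{-1}(-\tilde x_2)\to+\infty$ there, so the level set is pinched off the wall $\tilde x_2=-\hat x_2$; and $H\cap[-\hat x_i,\infty)^3\subset G$ since $(1-k)\hat x_2<\hat x_2$ buffers the wall $\tilde x_1+\tilde x_2=-\hat x_2$. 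The technical heart, and the ``central and unique idea'' the paper points to, is making these choices mutually consistent: enlarging $\overline L$ to engulf more of $G$ drives $k_0(\overline L)\to0$ via the second term of \eqref{eq:ksire}, which in turn shrinks the admissible $\hat\lambda_2$ in \eqref{eq:TLcorneredgeout}, and one must use (iv) (that at $\hat\lambda_2=0$ condition \eqref{eq:TLcorneredgeout} withstands arbitrarily large $\overline L$) together with the monotonicities (ii)--(iii) to see that the limits $\overline L\to\infty$, $k\to0$, $\hat\lambda_2\to0$ genuinely cooperate.
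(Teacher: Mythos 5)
Your proposal is correct and follows essentially the same route as the paper: prove (iv) by explicitly evaluating $\nu$ and $\theta^{-1}$ at $\hat{\lambda}_2=0$ and reducing \eqref{eq:TLcorneredgeout} to the sign of $(1-k)\hat{x}_2-\hat{x}_1$ (controlled by the first term of $k_0$ in \eqref{eq:ksire}); prove (ii)--(iii) via branchwise monotonicity of $\tilde V_{1,2}$ in $\hat{\lambda}_2$ and $k$ coming from the monotonicity of $\omega$, $P$, and $\nu$; and then obtain (i) by continuity plus the nestings. Your write-up supplies slightly more detail than the paper on (iii) (why the $\{\tilde x_2\le L/\lambda_2\}$ cap is needed) and on the outer inclusion $\overline G\subset G$, but the decomposition and the key computation are the same.
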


As demonstrated in Remark \ref{rem:linISSgain} in Section \ref{sec:endprf}, 
the ISS-gain function from $\tilde{u}$ to $\tilde{x}$ is bounded from 
above by a linear function. Recall that if a negative value 
$\tilde{u}$ goes below the threshold determined by the basic 
reproduction number, a bifurcation occurs. 
The ISS property established by Theorem \ref{thm:sire} establishes 
a linear transition globally in spite of the bifurcation. 

Level sets of the ISS Lyapunov function \eqref{eq:defVe} are shown in 
Fig. \ref{fig:ve} for 
$\tilde{V}=20$, $100$, $180$, $260$, ..., $340$. 
The parameters are 
$\beta=0.0002$, $\mu=0.015$, $\gamma=0.032$  and $\hat{B}=17$, 
and they satisfy 
$\hat{R}_0 = 4.82271>4.1333={\gamma}/{\mu}+2$. 
It can be verified that 
$\hat{\lambda}_2=0.01$, $k=0.0902$ and $\overline{L}=340$ 
fulfill \eqref{eq:ksire} and \eqref{eq:TLcorneredgeout}. 
The level sets can be expanded further by using smaller 
$\hat{\lambda}_2$, $k$ and $1/\overline{L}$.  

\begin{remark}
In contrast to  $\tilde{x}_2=-\hat{x}_2$ which is an equilibrium
of \eqref{eq:sirI} irrespective of $x_1$ and $x_3$, 
the equilibrium of $x_1$-equation \eqref{eq:sirS} depends on its input $x_2$, and 
the equilibrium of $x_3$-equation \eqref{eq:sirR} is influenced by its input $x_2$. 
Therefore, excluding the two-dimensional spacs $\tilde{x}_1=-\hat{x}_1$ and 
$\tilde{x}_3=-\hat{x}_3$ from 
the domain of $\tilde{V}$ is not necessary. In fact, the function chosen in 
\eqref{eq:defVe} is not forced to be unbounded at $\tilde{x}_1=-\hat{x}_1$ 
and $\tilde{x}_3=-\hat{x}_3$. 
The popular logarithmic function \cite{KOROLyap02} excludes 
$\tilde{x}_1=-\hat{x}_1$ and and $\tilde{x}_3=-\hat{x}_3$, and 
becomes unbounded there. 
\end{remark}

\begin{figure}[t]
\begin{center}
\includegraphics[width=9.2cm,height=5.8cm]{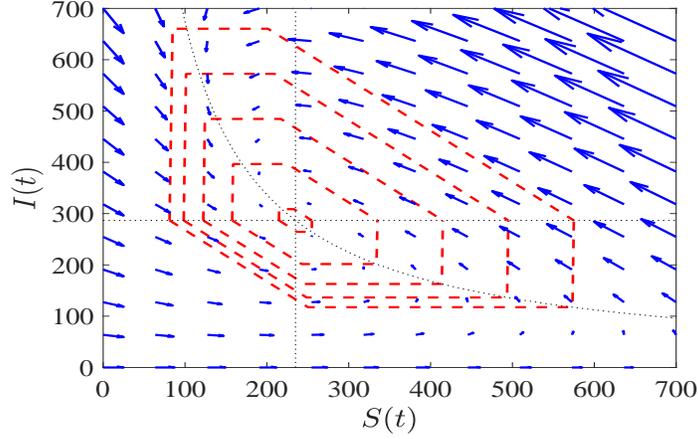}
\vspace{-1.2ex}
\end{center}
\caption{Level sets of the ISS Lyapunov function \eqref{eq:defVe} for the 
endemic equilibrium with $\hat{B}= 17$ (Dash lines); 
$S$ and $I$ are $x_1$ and $x_2$, respectively; 
The arrows are segments of 
trajectories for $B(t)=\hat{B}$; The dotted lines are $x_1=\hat{x}_1$, $x_2=\hat{x}_2$ and 
$x_1x_2=\hat{x}_1\hat{x}_2$; The lower left area along $x_1$-axis cannot be filled with 
sublevel sets of any Lyapunov functions.} 
\label{fig:ve}
\end{figure}

\section{Proofs for the Endemic Equilibrium}\label{sec:endprf}

\subsection{Proof of Lemma \ref{lem:sublevel}}\label{ssec:lemsublevel}

Since \eqref{eq:R0bifurtoxe} implies $({\gamma+\mu})/{\mu}<\hat{R}_0-1$, 
we have 
\begin{align*}
0<\frac{\gamma+\mu}{\mu(\hat{R}_0-1)}<1 
\end{align*}
and $\hat{x}_1< \hat{x}_2$. 
Property \eqref{eq:ksire} yields $k\in(0,1)$, and property 
\eqref{eq:lam12sire} guarantees $\lambda_2-k\lambda_1>0$. 
The choice \eqref{eq:ksire} yields 
\begin{align*}
\theta^{-1}\!\left(-\dfrac{\overline{L}}{\lambda_2}\right) 
< 
\dfrac{-\overline{L}}{\lambda_2/k-\lambda_1} .
\end{align*}
Since the function $\theta^{-1}$ satisfies $\theta^{-1}(0)=0$ and 
$(\theta^{-1})^\prime(s)>0$ for all $s\in(-\infty,\hat{x}_2)$, we have 
\begin{align*}
\theta^{-1}\!\left(-\dfrac{\overline{L}}{\lambda_2-k\lambda_1}\right) 
< 
\theta^{-1}\!\left(-\dfrac{\overline{L}}{\lambda_2}\right) . 
\end{align*}
From \eqref{eq:defthetainv} it is verified that 
$(\theta^{-1})^{\prime\prime}(s)>0$ holds  for all $s\in(-\infty,\hat{x}_2)$. 
Thus, 
\begin{align}
\forall L\in[0,\overline{L}]\hspace{1.5ex}
\theta^{-1}\!\left(-\dfrac{L}{\lambda_2-k\lambda_1}\right) 
\le \dfrac{-kL}{\lambda_2-k\lambda_1}  
\label{eq:TLcorneredgeslope}
\end{align}
is achieved. Combining this with \eqref{eq:TLcorneredgeout} yields 
\begin{align*}
\forall L\in[0,\overline{L}]\hspace{1.5ex}
\nu\left(\dfrac{L}{\lambda_2-k\lambda_1}\right)
\le \dfrac{-kL}{\lambda_2-k\lambda_1} . 
\end{align*}
Therefore, the partitioning in \eqref{eq:defVe12} 
is well-defined as long as $\tilde{x}\in H(\hat{\lambda}_2,k,\overline{L})$. 
%
%
Define 
\begin{subequations}\label{eq:defABCe}
\begin{align}
\bfA&:=\left\{ \tilde{x}\!\in\! H(\hat{\lambda}_2,k,\overline{L}) : 0\le\tilde{x}_2, \ 
-k\tilde{x}_2 \le \tilde{x}_1
\right\}
\label{eq:defABC_Ae}
\\
\bfB&:=\left\{ \tilde{x}\!\in\! H(\hat{\lambda}_2,k,\overline{L}) : 0\le\tilde{x}_2, \  
\nu(\tilde{x}_2)\le \tilde{x}_1< -k\tilde{x}_2
\right\}
\label{eq:defABC_Be}\\
\bfC&:=\left\{ \tilde{x}\!\in\! H(\hat{\lambda}_2,k,\overline{L}) : 0\le\tilde{x}_2, \  
\tilde{x}_1< \nu(\tilde{x}_2)
\right\} 
\label{eq:defABC_Ce}
\\
\bfD&:=\left\{ \tilde{x}\!\in\! H(\hat{\lambda}_2,k,\overline{L}) : \tilde{x}_2<0, \ 
\tilde{x}_1 \le -k\tilde{x}_2
\right\}
\label{eq:defABC_De}
\\
\bfE&:=\left\{ \tilde{x}\!\in\! H(\hat{\lambda}_2,k,\overline{L}) : \tilde{x}_2<0, \  
-k\tilde{x}_2<\tilde{x}_1 \le 
\theta^{-1}(-\tilde{x}_2)
\right\}
\label{eq:defABC_Ee}\\
\bfF&:=\left\{ \tilde{x}\!\in\! H(\hat{\lambda}_2,k,\overline{L}) : \tilde{x}_2<0, \  
\theta^{-1}(-\tilde{x}_2)<\tilde{x}_1
\right\} . 
\label{eq:defABC_Fe}
\end{align}
\end{subequations}
Clearly, we have 
\begin{align*}
\bfA\cup \bfB\cup \bfC\cup \bfD\cup \bfE\cup \bfF
=H(\hat{\lambda}_2,k,\overline{L}). 
\end{align*}
By the definition of \eqref{eq:defVe}
\eqref{eq:defVe12} and \eqref{eq:defVe3}, we have 
$\tilde{V}(\tilde{x})<\infty$ for all 
$\tilde{x}\in H(\hat{\lambda}_2,k,\overline{L})$. 
On the set of 
$\tilde{x}_{1,2}$ belonging to $H(\hat{\lambda}_2,k,\overline{L})$, 
$\tilde{V}_{1,2}(\tilde{x}_{1,2})=0$ implies 
$\tilde{x}_{1,2}=0$. Due to \eqref{eq:defVe3} and \eqref{eq:defVe},
\begin{align}
\tilde{V}(\tilde{x})=0 \ \Leftrightarrow\ \tilde{x}=0 . 
\label{eq:posidefdero}
\end{align}
By virtue of $\lambda_2-k\lambda_1>0$, the implications 
\begin{align*}
&
-k\tilde{x}_2\le\tilde{x}_1
\ \Rightarrow\  
\lambda_1\tilde{x}_1+\lambda_2\tilde{x}_2\ge 
(\lambda_2-k\lambda_1)\tilde{x}_2 
\\
&
-k\tilde{x}_2\ge\tilde{x}_1
\ \Rightarrow\  
-\lambda_1\tilde{x}_1-\lambda_2\tilde{x}_2\ge 
-(\lambda_2-k\lambda_1)\tilde{x}_2 
\end{align*}
yield 
\begin{align}
\tilde{x}\neq 0  \ \Rightarrow\ 
\tilde{V}(\tilde{x})>0.
\label{eq:posidefposi}
\end{align}
For $\tilde{x}_2\ge 0$, 
the function $\tilde{V}_{1,2}$ is continuous at $\tilde{x}_1=-k\tilde{x}_2$ , 
and $\tilde{V}_{1,2}(\tilde{x}_{1,2})=-k\lambda_1\tilde{x}_2+\lambda_2\tilde{x}_2$. 
At $\tilde{x}_1= \nu(\tilde{x}_2)$ for $\tilde{x}_2\ge 0$, 
the function $\tilde{V}_{1,2}$ is continuous 
since 
\begin{align*}
\tilde{V}_{1,2}(\tilde{x}_{1,2})
&=
P^{-1}\left(-\lambda_1\tilde{x}_1+\hat{\lambda}_2\tilde{x}_2\right) 
\\
&=
P^{-1}\left(-\lambda_1\nu(\tilde{x}_2)+\hat{\lambda}_2\tilde{x}_2\right) 
\\
&=
P^{-1}\left(-\hat{\lambda}_2\tilde{x}_2+P((\lambda_2-k\lambda_1)\tilde{x}_2)+\hat{\lambda}_2\tilde{x}_2\right) 
\\
&=
(\lambda_2-k\lambda_1)\tilde{x}_2 . 
\end{align*}
For $\tilde{x}_2<0$, 
the function $\tilde{V}_{1,2}$ is continuous at $\tilde{x}_1=-k\tilde{x}_2$, and 
$
\tilde{V}_{1,2}(\tilde{x}_{1,2})=P^{-1}((k\lambda_1-\lambda_2)\tilde{x}_2) 
$. 
At $\tilde{x}_1= \theta^{-1}(-\tilde{x}_2)$ for $\tilde{x}_2< 0$, 
the function $\tilde{V}_{1,2}$ is continuous since 
\begin{align*}
P^{-1}((k\lambda_1-\lambda_2))\tilde{x}_2)
&=
P^{-1}(\lambda_2-k\lambda_1)\theta(\tilde{x}_1))
\\
&=
\lambda_1\tilde{x}_1+
\hat{\lambda}_2\theta(\tilde{x}_1)
\\
&=
\lambda_1\tilde{x}_1-\hat{\lambda}_2\tilde{x}_2 . 
\end{align*}
For $\tilde{x}_1< 0$, 
the function $\tilde{V}_{1,2}$ is continuous at $\tilde{x}_2=0$, and 
$\tilde{V}_{1,2}(\tilde{x}_{1,2})=P^{-1}(-\lambda_1\tilde{x}_1)$. 
At $\tilde{x}_2=0$ for $\tilde{x}_1>0$, 
the function $\tilde{V}_{1,2}$ is continuous and 
$\tilde{V}_{1,2}(\tilde{x}_{1,2})=\lambda_1\tilde{x}_1$. 
These arguments verify 
\begin{align}
\overline{G}(\hat{\lambda}_2,k,\overline{L})
\subset
H(\hat{\lambda}_2,k,\overline{L}) . 
\label{eq:GincludedinH}
\end{align}

%
Define 
\begin{align*}
\overline{G}_{1,2}(\hat{\lambda}_2,k,L):= \left\{\tilde{x}\in[-\hat{x}_i,\infty)^2 : 
\tilde{V}_{1,2}(\tilde{x}_{1,2})\le L \right\}
\end{align*}
for $L\in\Rset_+$. 
Since for each $s>0$, $P(s)$ defined with $\hat{\lambda}_2=a$ 
is larger than $P(s)$ defined with $\hat{\lambda}_2=b$ for 
$0 \le a \le b$, the definition \eqref{eq:defVe12} yields
\begin{align}
0 \le a \le b\ \Rightarrow\ 
\overline{G}_{1,2}(b,k,L)\subset 
\overline{G}_{1,2}(a,k,L)
\label{eq:GLorderinhatlam212}
\end{align}
for all $L\in\Rset_+$. 
The definitions \eqref{eq:defVe} and \eqref{eq:defVe3} proves
\eqref{eq:GLorderinhatlam2} in \textbf{(ii)}. 
Property \eqref{eq:GLorderink} in \textbf{(iii)} 
is also verified from \eqref{eq:defVe12}. 

Since $\omega(s)$ is increasing in $\hat{\lambda}_2\ge 0$ for 
each $s>0$ by definition, 
$P(s)$ is decreasing in $\hat{\lambda}_2$. 
Thus the function $\nu(L)$ 
is increasing in $\hat{\lambda}_2\ge 0$ for each $L>0$. 
Hence, for each $\overline{L}\ge 0$,  
there always exists $\hat{\lambda}_2>0$  such that 
\eqref{eq:TLcorneredgeout} holds. 
In fact, for $\hat{\lambda}_2=0$ and $\lambda_0:=\lambda_2-k\lambda_1>0$ we have 
\begin{align*}
\lambda_1\theta^{-1}(-s)
-\lambda_1\nu(s)
&=
\frac{\lambda_1\hat{x}_1\lambda_0\hat{x}_2}{\lambda_0\hat{x}_2+\lambda_0s}-\lambda_1\hat{x}_1
+
\lambda_0\hat{x}_2-\frac{\lambda_1\hat{x}_1\lambda_0\hat{x}_2}{\lambda_1\hat{x}_1+\lambda_0s}
\\
&=
\frac{\lambda_0s(\lambda_0s+\lambda_1\hat{x}_1+\lambda_0\hat{x}_2)(\lambda_0\hat{x}_2-\lambda_1\hat{x}_1)}
{(\lambda_1\hat{x}_1+\lambda_0s)(\lambda_0\hat{x}_2+\lambda_0s)} 
\\
&\ge 0 
\end{align*}
for all $s\in\Rset_+$. 
The last inequality follows from \eqref{eq:lam12sire} and 
\begin{align*}
1-k=\frac{\lambda_0}{\lambda_1}\ge
\frac{\hat{x}_1}{\hat{x}_2}=
\frac{\gamma+\mu}{\mu(\hat{R}_0-1)}
\end{align*}
guaranteed by \eqref{eq:ksire}. 
Thus, property \eqref{eq:TLcorneredgeout} holds for 
all $\overline{L}\in\Rset_+$ if $\overline{\lambda}_2=0$. 
Item \textbf{(iv)} is proved. 

Continuity of the functions guarantees the existence of $\overline{\lambda}_2>0$ 
and $\overline{L}\ge 0$ satisfying 
\eqref{eq:TLcorneredgeout} for all $\hat{\lambda}_2\in(0,\overline{\lambda}_2]$. 
By virtue of \eqref{eq:defVe12} and \eqref{eq:GLorderinhatlam212}, 
for any $\tilde{x}_{1,2}\in G_{1,2}$, 
there exist $\overline{\lambda}_2>0$, $\overline{L}\ge 0$ and 
$\overline{k}\in(0,k_0)$ such that 
$\tilde{x}_{1,2}\in\overline{G}_{1,2}(\hat{\lambda}_2,k,\overline{L})$, 
\eqref{eq:TLcorneredgeslope} and \eqref{eq:TLcorneredgeout} 
are satisfied for all $\hat{\lambda}_2\in(0,\overline{\lambda}_2]$ 
and all $k\in(0,\overline{k}]$
Therefore, \eqref{eq:defVe} and \eqref{eq:defVe3} 
prove the claim \textbf{(i)}.

\subsection{Proof of Theorem \ref{thm:sireLyap}}\label{ssec:thmsireLyap}

First, recall that equation \eqref{eq:sir} is forward complete, and 
by itself guarantees 
the forward invariance of the set $[-\hat{x}_i,\infty)^3$. i.e., 
$\tilde{x}(t)\in[-\hat{x}_i,\infty)^3$ for all $t\in\Rset_+$  
with respect to all $\tilde{x}(0)\in[-\hat{x}_i,\infty)^3$ and 
$\tilde{u}(t)$ satisfying $\tilde{u}(t)\in[-\hat{B},\infty)$. 
As demonstrated in the proof of Lemma \ref{lem:sublevel}, 
for given $\hat{\lambda}_2$, $\overline{L}$, $k$ under the 
stated assumptions, 
the function $\tilde{V}$ is defined and continuous on 
$H(\hat{\lambda}_2,k,\overline{L})$, and satisfies 
\eqref{eq:posidefdero} and \eqref{eq:posidefposi}. 
We also have $\lambda_2-k\lambda_1>0$. 
Since $\theta^{-1}$ and $P^{-1}$ defined in \eqref{eq:defthetainv} and \eqref{eq:defPinv} 
are locally Lipschitz, 
the function $\tilde{V}_{1,2}$ defined by \eqref{eq:defVe12} is locally Lipschitz. 
Since $\tilde{V}_{3}$ defined by \eqref{eq:defVe3} is locally Lipschitz, 
so is $\tilde{V}$. 
Since $\lambda_2$ and $\hat{\lambda}_2$ are positive, 
the definitions \eqref{eq:defthetainv} and c imply 
$(P^{-1})^\prime(s)>0$ for all $s\in(-\infty,(\lambda_2-k\lambda_1)\hat{x}_2)$. 
In fact, 
\begin{align}
(P^{-1})^\prime(s)&=
\frac{1}{\lambda_2-k\lambda_1}\left(
\frac{\lambda_1(\lambda_2-k\lambda_1)^2\hat{x}_1\hat{x}_2}{((\lambda_2-k\lambda_1)\hat{x}_2-s)^2}
+\hat{\lambda}_2\right)
\nonumber\\
&>
\frac{\hat{\lambda}_2}{\lambda_2-k\lambda_1} . 
\label{eq:derPinv}
\end{align}
From \eqref{eq:defPinv} and the above, 
\begin{align}
&
\lim_{s\to(\lambda_2-k\lambda_1)\hat{x}_2-}P^{-1}(s)=\infty
\label{eq:Pinvlim}
\\
&
\lim_{s\to(\lambda_2-k\lambda_1)\hat{x}_2-}(P^{-1})^\prime(s)=\infty . 
\label{eq:derPinvlim}
\end{align}
Now, we evaluate the derivative of $V(x(t))$ along the solution $x(t)$ of 
\eqref{eq:sir} region by region in accordance with \eqref{eq:defABCe}.
In the region $\bfA\cap\{\tilde{x}_3\in[0,\infty)\}$, by virtue of \eqref{eq:lam12sire} and 
$f(\hat{x},\hat{B})=0$, we have 
\begin{align}\label{eq:DVA+}
\dfrac{\partial \tilde{V}}{\partial \tilde{x}}\tilde{f}
&= 
\lambda_1(B-\mu S -\beta IS)
+\lambda_2(\beta IS -\gamma I-\mu I)
+\lambda_3(\gamma I-\mu R)
\nonumber\\
&=\lambda_1(\tilde{u}
-\mu\tilde{x}_1 -(\gamma_A+\mu)\tilde{x}_2) -\lambda_3\mu\tilde{x}_3  
\nonumber\\
&\le
-\mu\tilde{V}(\tilde{x}) + \lambda_1\tilde{u} , 
\end{align}
where $\gamma_A=\gamma(1-\lambda_3/\lambda_2)$. 
In the region $\bfA\cap\{\tilde{x}_3\in[-\hat{x}_3,0]\}$, 
\begin{align}\label{eq:DVA-}
\dfrac{\partial \tilde{V}}{\partial \tilde{x}}\tilde{f}
&= 
\lambda_1(B-\mu S -\beta IS)
+\lambda_2(\beta IS -\gamma I-\mu I)
+\lambda_3(\mu R-\gamma I)
\nonumber\\
&\le\lambda_1(\tilde{u}
-\mu\tilde{x}_1 -(\gamma+\mu)\tilde{x}_2) 
+\lambda_3\mu\tilde{x}_3 
\nonumber\\
&\le
-\mu\tilde{V}(\tilde{x}) + \lambda_1\tilde{u}. 
\end{align}
In the set $\bfB\cap\{\tilde{x}_3\in[0,\infty)\}$, 
due to 
\begin{align*}
-k\tilde{x}_2>x_1-\hat{x}_1=x_1-\frac{\gamma+\mu}{\beta}, 
\end{align*}
we have 
\begin{align}\label{eq:DVB+}
\dfrac{\partial \tilde{V}}{\partial \tilde{x}}\tilde{f}
&=
(\lambda_2-k\lambda_1)(\beta IS -\gamma I-\mu I)
+\lambda_3(\gamma I-\mu R)
\nonumber\\
&\le \
(\lambda_2-k\lambda_1)\left(\gamma+\mu-k\beta\tilde{x}_2
-\gamma-\mu\right)x_2
+\lambda_3(\gamma\tilde{x}_2-\mu\tilde{x}_3)
\nonumber\\
&\le 
-k(\lambda_2-k\lambda_1)\beta\hat{x}_2\tilde{x}_2
+\lambda_3(\gamma \tilde{x}_2-\mu \tilde{x}_3)
\nonumber\\
&=
-\left(k(\lambda_2-k\lambda_1)\mu(\hat{R}_0-1)-\lambda_3\gamma\right)\tilde{x}_2-\lambda_3\mu \tilde{x}_3
\nonumber\\
&=
-\left(k\mu(\hat{R}_0-1)-\frac{\lambda_3\gamma}{\lambda_2-k\lambda_1}\right)
\tilde{V}_{1,2}(\tilde{x}_{1,2})-\lambda_3\mu \tilde{x}_3  
\nonumber\\
&\le
-a_B\tilde{V}(\tilde{x})
\end{align}
for some $a_B>0$ since 
$k\mu(\hat{R}_0-1)>{\lambda_3\gamma}/{\lambda_2-k\lambda_1}$ 
is guaranteed by \eqref{eq:lam12sire} and \eqref{eq:lam3sire}. 
In the set $\bfB\cap\{\tilde{x}_3\in[-\hat{x}_3,0]\}$, we obtain 
\begin{align}\label{eq:DVB-}
\dfrac{\partial \tilde{V}}{\partial \tilde{x}}\tilde{f}
&=
(\lambda_2-k\lambda_1)(\beta IS -\gamma I-\mu I)
+\lambda_3(\mu R-\gamma I)
\nonumber\\
&\le 
-k(\lambda_2-k\lambda_1)\beta\hat{x}_2\tilde{x}_2-\lambda_3\gamma \tilde{x}_2+\lambda_3\mu \tilde{x}_3
\nonumber\\
&=
-\left(k\lambda_2\mu(\hat{R}_0-1)+\lambda_3\gamma\right)\tilde{x}_2+\lambda_3\mu \tilde{x}_3
\nonumber\\
&=
-\left(k\mu(\hat{R}_0-1)+\frac{\lambda_3\gamma}{\lambda_2-k\lambda_1}\right)
\tilde{V}_{1,2}(\tilde{x}_{1,2})+\lambda_3\mu \tilde{x}_3 
\nonumber\\
&\le
-a_B\tilde{V}(\tilde{x}) . 
\end{align}
Since $\tilde{x}_2\le -\theta(\tilde{x}_1)$ is equivalent to 
$x_1x_2\le\hat{x}_1\hat{x}_2$, 
property \eqref{eq:TLcorneredgeout} guarantees 
$x_1x_2\le\hat{x}_1\hat{x}_2$ for all 
$\tilde{x}_{1,2}$ in $\bfC$. 
Hence, in the region $\bfC\cap\{\tilde{x}_3\in[0,\infty)\}$, we have 
\begin{align}\label{eq:DVC+}
\dfrac{\partial \tilde{V}}{\partial \tilde{x}}\tilde{f}
&= 
(P^{-1})^\prime(v)
\left(\lambda_1(\mu \tilde{x}_1-\tilde{u})
-\hat{\lambda}_2(\gamma_C+\mu)\tilde{x}_2\right)
-\lambda_3\mu \tilde{x}_3
\nonumber\\
&\hspace{15ex}
+(P^{-1})^\prime(v)(\lambda_1+\hat{\lambda}_2)\beta 
(x_1x_2-\hat{x}_1\hat{x}_2)
\nonumber\\
&\le
(P^{-1})^\prime(v)
\left(\lambda_1(\mu \tilde{x}_1-\tilde{u})
-\hat{\lambda}_2(\gamma_C+\mu)\tilde{x}_2\right)
-\lambda_3\mu \tilde{x}_3  
\nonumber\\
&\le
(P^{-1})^\prime(v)
\left(-\mu v
-\lambda_1\tilde{u}\right)
-\lambda_3\mu \tilde{x}_3  ,
\end{align}
by virtue of \eqref{eq:derPinv}, 
where $v=P(\tilde{V}_{1,2}(\tilde{x}_{1,2}))=-\lambda_1\tilde{x}_1+\hat{\lambda}_2\tilde{x}_2$. 
Note that 
\begin{align*}
\gamma_C:=\gamma\left(1-
\frac{\lambda_3(\lambda_2-k\lambda_1)}{\hat{\lambda}_2^2}
\right)\ge 0, 
\end{align*}
due to \eqref{eq:lam12sire} and \eqref{eq:lam3sire}. 
Property \eqref{eq:derPinvlim} implies the existence of 
$\alpha_{C0}\in\calK_\infty$ such that 
\begin{align}\label{eq:DVC+gas}
\tilde{u}=0 \ \Rightarrow\ 
\dfrac{\partial \tilde{V}}{\partial \tilde{x}}\tilde{f}
\le -\alpha_{C0}(\tilde{V}) . 
\end{align}
In the region $\bfC\cap\{\tilde{x}_3\in[-\hat{x}_3,0]\}$, 
\begin{align}\label{eq:DVC-}
\dfrac{\partial \tilde{V}}{\partial \tilde{x}}\tilde{f}
&= 
(P^{-1})^\prime(v)
\left(\lambda_1(\mu \tilde{x}_1-\tilde{u})
-\hat{\lambda}_2(\gamma+\mu)\tilde{x}_2\right)
+\lambda_3\mu \tilde{x}_3
\nonumber\\
&\hspace{15ex}
+(P^{-1})^\prime(v)(\lambda_1+\hat{\lambda}_2)\beta 
(x_1x_2-\hat{x}_1\hat{x}_2)
\nonumber\\
&\le
(P^{-1})^\prime(v)
\left(\lambda_1(\mu \tilde{x}_1-\tilde{u})
-\hat{\lambda}_2(\gamma+\mu)\tilde{x}_2\right)
+\lambda_3\mu \tilde{x}_3  
\nonumber\\
&\le
(P^{-1})^\prime(v)
\left(-\mu v
-\lambda_1\tilde{u}\right)
+\lambda_3\mu \tilde{x}_3  , 
\end{align}
and \eqref{eq:DVC+gas}. 
In the region $\bfD\cap\{\tilde{x}_3\in[-\hat{x}_3,0]\}$, we obtain 
\begin{align}\label{eq:DVD-}
\dfrac{\partial \tilde{V}}{\partial \tilde{x}}\tilde{f}
&= 
(P^{-1})^\prime(w)\bigl[\lambda_1(\mu S +\beta IS -B)
\nonumber\\
&\hspace{12ex}
+\lambda_2(\gamma I+\mu I -\beta IS)\bigr]
+\lambda_3(\mu R - \gamma I)
\nonumber\\
&\le
(P^{-1})^\prime(w)
\lambda_1(\mu\tilde{x}_1-\tilde{u}
+(\gamma_D+\mu)\tilde{x}_2) 
+\lambda_3\mu\tilde{x}_3 
\nonumber\\
&\le
(P^{-1})^\prime(w)
(-\mu w-\lambda_1\tilde{u}) 
+\lambda_3\mu\tilde{x}_3  ,
\end{align}
by virtue of \eqref{eq:derPinv}, 
where $w=P(\tilde{V}_{1,2}(\tilde{x}_{1,2}))=-\lambda_1\tilde{x}_1-\lambda_2\tilde{x}_2$ and 
\begin{align*}
\gamma_D:=\gamma\left(1-
\frac{\lambda_3(\lambda_2-k\lambda_1)}{\lambda_2\hat{\lambda}_2}
\right)\ge 0, 
\end{align*}
Here, $\gamma_D \ge 0$ follows from 
$k\in(0,1)$, \eqref{eq:lam12sire} and \eqref{eq:lam3sire}. 
The existence of $\alpha_{D0}\in\calK_\infty$ such that 
\begin{align}\label{eq:DVD-gas}
\tilde{u}=0 \ \Rightarrow\ 
\dfrac{\partial \tilde{V}}{\partial \tilde{x}}\tilde{f}
\le -\alpha_{D0}(\tilde{V}) 
\end{align}
also follows from \eqref{eq:derPinvlim}. 
In the case of $\tilde{x}\in \bfD\cap\{\tilde{x}_3\in[0,\infty)\}$, we have 
\begin{align}\label{eq:DVD+}
\dfrac{\partial \tilde{V}}{\partial \tilde{x}}\tilde{f}
&= 
(P^{-1})^\prime(w)\bigl[\lambda_1(\mu S +\beta IS -B)
\nonumber\\
&\hspace{12ex}
+\lambda_2(\gamma I+\mu I -\beta IS)\bigr]
+\lambda_3(\gamma I - \mu R)
\nonumber\\
&\le
(P^{-1})^\prime(w)
\lambda_1(\mu\tilde{x}_1-\tilde{u}
+(\gamma+\mu)\tilde{x}_2) 
-\lambda_3\mu\tilde{x}_3 
\nonumber\\
&\le
(P^{-1})^\prime(w)
(-\mu w-\lambda_1\tilde{u}) 
-\lambda_3\mu\tilde{x}_3  ,  
\end{align}
and \eqref{eq:DVD-gas}. 
In the case of  $\tilde{x}\in \bfE\cap\{\tilde{x}_3\in[-\hat{x}_3,0]\}$, from 
\begin{align*}
-k\tilde{x}_2<x_1-\hat{x}_1=x_1-\frac{\gamma+\mu}{\beta}
\end{align*}
and \eqref{eq:derPinvlim} we obtain 
\begin{align}\label{eq:DVE-}
\dfrac{\partial \tilde{V}}{\partial \tilde{x}}\tilde{f}
&=
(P^{-1})^\prime(z)
(\lambda_2-k\lambda_1)(\gamma I+\mu I-\beta IS )
+\lambda_3(\mu R-\gamma I)
\nonumber\\
&\le 
(P^{-1})^\prime(z)
(\lambda_2-k\lambda_1)\left(\gamma+\mu +k\beta\tilde{x}_2
-\gamma-\mu\right)x_2
+\lambda_3(\mu\tilde{x}_3-\gamma\tilde{x}_2)
\nonumber\\
&\le 
(P^{-1})^\prime(z)
(\lambda_2-k\lambda_1)
(\hat{x}_2-\theta\circ\omega^{-1}(\overline{L}))
\beta\tilde{x}_2
-\lambda_3\gamma \tilde{x}_2+\lambda_3\mu \tilde{x}_3
\nonumber\\
&\le 
-(P^{-1})^\prime(z)(\hat{x}_2-\theta\circ\omega^{-1}(\overline{L}))
\beta\gamma_E z
+\lambda_3\mu \tilde{x}_3  
\nonumber\\
&\le 
-\alpha_E(\tilde{V}(\tilde{x}))
\end{align}
for some $\alpha_E\in\calK_\infty$, 
where $z=P(\tilde{V}_{1,2}(\tilde{x}_{1,2}))=(k\lambda_1-\lambda_2)\tilde{x}_2$ and 
\begin{align*}
\gamma_E:=1-
\frac{\lambda_3\gamma}{\hat{\lambda}_2
(\hat{x}_2-\theta\circ\omega^{-1}(\overline{L}))\beta}
> 0.
\end{align*}
Here, \eqref{eq:lam12sire} and \eqref{eq:lam3sire} imply 
the above inequality. 
In the case of  $\tilde{x}\in \bfE\cap\{\tilde{x}_3\in[0,\infty)\}$, we have 
\begin{align}\label{eq:DVE+}
\dfrac{\partial \tilde{V}}{\partial \tilde{x}}\tilde{f}
&=
(P^{-1})^\prime(z)
(\lambda_2-k\lambda_1)(\gamma I+\mu I-\beta IS )
+\lambda_3(\gamma I-\mu R)
\nonumber\\
&\le 
(P^{-1})^\prime(z)
(\lambda_2-k\lambda_1)
(\hat{x}_2-\theta\circ\omega^{-1}(\overline{L}))
\beta\tilde{x}_2
+\lambda_3\gamma \tilde{x}_2
-\lambda_3\mu \tilde{x}_3
\nonumber\\
&\le 
-(P^{-1})^\prime(z)(\hat{x}_2-\theta\circ\omega^{-1}(\overline{L}))
\beta z
-\lambda_3\mu \tilde{x}_3  
\nonumber\\
&\le 
-\alpha_E(\tilde{V}(\tilde{x})) .
\end{align}
In the region $\bfF\cap\{\tilde{x}_3\in[-\hat{x}_3,0]\}$, 
since $\theta^{-1}(-\tilde{x}_2)<\tilde{x}_1$ is equivalent to 
$x_1x_2>\hat{x}_1\hat{x}_2$, we obtain 
\begin{align}\label{eq:DVF-}
\dfrac{\partial \tilde{V}}{\partial \tilde{x}}\tilde{f}
&= 
\lambda_1(\tilde{u}-\mu \tilde{x}_1)
+\hat{\lambda}_2(\gamma_F+\mu)\tilde{x}_2
+\lambda_3\mu \tilde{x}_3
-(\lambda_1+\hat{\lambda}_2)\beta 
(x_1x_2-\hat{x}_1\hat{x}_2)
\nonumber\\
&\le
\lambda_1(\tilde{u}-\mu \tilde{x}_1)
+\hat{\lambda}_2(\gamma_F+\mu)\tilde{x}_2
+\lambda_3\mu \tilde{x}_3
\nonumber\\
&\le
-\mu \tilde{V}(\tilde{x})
+\lambda_1\tilde{u}.
\end{align}
where 
$\gamma_F:=\gamma(1-{\lambda_3}/{\hat{\lambda}_2})\ge 0$ 
is implied by \eqref{eq:lam12sire} and \eqref{eq:lam3sire}. 
In the case of $\bfF\cap\{\tilde{x}_3\in[0,\infty)\}$, we have 
\begin{align}\label{eq:DVF+}
\dfrac{\partial \tilde{V}}{\partial \tilde{x}}\tilde{f}
&\le  
\lambda_1(\tilde{u}-\mu \tilde{x}_1)
+\hat{\lambda}_2(\gamma+\mu)\tilde{x}_2
-\lambda_3\mu \tilde{x}_3
-(\lambda_1+\hat{\lambda}_2)\beta 
(x_1x_2-\hat{x}_1\hat{x}_2)
\nonumber\\
&\le
-\mu \tilde{V}(\tilde{x})
+\lambda_1\tilde{u}
\end{align}
%

Therefore, 
since \eqref{eq:DVA+}, \eqref{eq:DVA-}, 
\eqref{eq:DVB+}, \eqref{eq:DVB-}, 
\eqref{eq:DVC+gas}, 
\eqref{eq:DVD-gas}, 
\eqref{eq:DVE-}, \eqref{eq:DVE+}, 
\eqref{eq:DVF-} and \eqref{eq:DVF+} 
cover ${\partial \tilde{V}}/{\partial \tilde{x}\cdot}\tilde{f}$ on 
the entire $H(\hat{\lambda}_2,k,\overline{L})$ 
except on the boundaries between the regions, 
as in the argument in the proof of Theorem \ref{thm:sirf}, 
the equilibrium $\tilde{x}=0$ is 
asymptotically stable for $\tilde{u}=0$, 
The inclusion \eqref{eq:GincludedinH} and the forward invariance of 
$[-\hat{x}_i,\infty)^3$ imply that 
the set 
$\overline{G}(\hat{\lambda}_2,k,\overline{L})$ is forward invariant 
and belongs to the domain of attraction for $\tilde{u}=0$.

Next, define
\begin{align*}
&
Q(\overline{L})=\left\{[\tilde{V}_{1,2},\tilde{V}_3]^T\in\Rset_+^2 : 
\exists L\le [\overline{L},\infty)\hspace{1.5ex} 
\tilde{V}_{1,2}+\tilde{V}_3=L\right\}
\\
&
\eta(\overline{L})=\min_{[\tilde{V}_{1,2},\tilde{V}_3]^T\in Q(\overline{L})}
P(\tilde{V}_{1,2})
+\frac{\lambda_3\tilde{V}_3}{(P^{-1})^\prime(P(\tilde{V}_{1,2}))}
\end{align*}
for $\overline{L}\in\Rset_+$. 
By definition, $\eta$ is of class $\calP$  and non-decreasing. Furthermore, 
the definition \eqref{eq:defP} gives 
\begin{align}
\lim_{s\to\infty}\eta(s)=\lim_{s\to\infty}P(s)
=(\lambda_2-k\lambda_1)\hat{x}_2
\end{align}
since $\tilde{V}_{1,2}< \tilde{V}_{1,2}+\tilde{V}_3=\infty$ 
implies $\tilde{V}_3=\infty$ and 
${\lambda_3\tilde{V}_3/(P^{-1})^\prime(P(\tilde{V}_{1,2}))}=\infty$. 
From \eqref{eq:DVC+} and \eqref{eq:DVC-}, in region $\bfC$, 
\begin{align}\label{eq:DVCiss}
&\tilde{u}\in[-\delta\mu\eta(\tilde{V})/\lambda_1,\infty)
\ \Rightarrow\ 
\nonumber\\
& \hspace{1.3ex}
\dfrac{\partial \tilde{V}}{\partial \tilde{x}}\tilde{f}
\le -(1-\delta)(
(P^{-1})^\prime(v)\mu v+\lambda_3\mu |\tilde{x}_3|) 
\le -\alpha_C(\tilde{V})  
\end{align}
holds for some $\alpha_C\in\calK_\infty$. 
Applying the same argument to \eqref{eq:DVD-} and \eqref{eq:DVD+} leads to 
\begin{align}\label{eq:DVDiss}
&\tilde{u}\in[-\delta\mu\eta(\tilde{V})/\lambda_1,\infty)
\ \Rightarrow\ 
\nonumber\\
& \hspace{1.3ex}
\dfrac{\partial \tilde{V}}{\partial \tilde{x}}\tilde{f}
\le -(1-\delta)(
(P^{-1})^\prime(w)\mu w+\lambda_3\mu |\tilde{x}_3|) 
\le -\alpha_D(\tilde{V})  
\end{align}
with $\alpha_D=\alpha_C$ for region $\bfD$. 
On the other hand, due to \eqref{eq:DVA+}, \eqref{eq:DVA-}, 
\eqref{eq:DVF-} and \eqref{eq:DVF+}, in $\bfA$ and $\bfF$ we have 
\begin{align}\label{eq:DVAFiss}
\tilde{u}\in(-\infty, \delta\mu\tilde{V}/\lambda_1]
\ \Rightarrow\ 
\dfrac{\partial \tilde{V}}{\partial \tilde{x}}\tilde{f}
\le -(1-\delta)\mu\tilde{V} . 
\end{align}
Let $\zeta: \Rset\to(-\delta\mu P(\overline{L})/\lambda_1,\delta\mu\overline{L}/\lambda_1)$ 
be a bijective continuous function satisfying $\zeta(0)=0$. 
Define $r=\zeta^{-1}(\tilde{u})$. 
Properties \eqref{eq:DVAFiss}, 
\eqref{eq:DVB+}, \eqref{eq:DVB-}, 
\eqref{eq:DVCiss}, 
\eqref{eq:DVDiss}, 
\eqref{eq:DVE-} and \eqref{eq:DVE+} imply the existence of $\chi\in\calK$ 
and $\alpha\in\calK_\infty$ such that 
\begin{subequations}\label{eq:DVABCDEF}
\begin{align}
&
\tilde{V}(\tilde{x}))\ge \chi(|r|) 
\ \Rightarrow\ 
\dfrac{\partial \tilde{V}}{\partial \tilde{x}}\tilde{f}\le
-\alpha(\tilde{V}(\tilde{x}))
\label{eq:DVABCDEFderV}
\\
&
\overline{L}\ge \chi(|r|)
\label{eq:DVABCDEFderChi}
\end{align}
\end{subequations}
are satisfied for all $\tilde{x}\in H(\hat{\lambda}_2,k,\overline{L})$ and all 
$r(t)\in\Rset$ with any given $\delta\in(0,1)$ 
except on the boundaries between the regions. 
Here, 
$\tilde{u}\in
(-\delta\mu P(\overline{L})/\lambda_1,\delta\mu\overline{L}/\lambda_1)$ 
guarantees the achievement of \eqref{eq:DVABCDEFderChi}. 
With the help of the forward invariance of $[-\hat{x}_i,\infty)^3$, 
property \eqref{eq:DVABCDEF} implies that 
$\tilde{x}(0)\in\overline{G}(\hat{\lambda}_2,k,\overline{L})$
yields $\tilde{x}(t)\in\overline{G}(\hat{\lambda}_2,k,\overline{L})$ 
for all $t\in\Rset_+$ as long as $\tilde{u}$ satisfies \eqref{eq:issuL}. 
Invoking the argument of the lower Dini derivative again, 
property \eqref{eq:DVABCDEF} also imply 
ISS of the SIR model \eqref{eq:sir} with respect to 
the input $\tilde{u}$ satisfying \eqref{eq:issuL} \cite{SONISSV}. 
In fact, the function $\tilde{V}(\tilde{x})$ defined in \eqref{eq:defVe} is an ISS 
Lyapunov function on the compact set $\overline{G}(\hat{\lambda}_2,k,\overline{L})$ 
for the given $\hat{\lambda}_2$, $\overline{L}$, $k>0$.

\subsection{Proof of Theorem \ref{thm:sire}}\label{ssec:thmsire}

Define 
\begin{align*}
&
T:=\left\{ \tilde{x}\in\Omega : 
\tilde{x}_1\le -k\tilde{x}_2, \ 
\tilde{x}_2\le 0
\right\}
\\
&
W(\tilde{x}):=-\tilde{x}_1-\tilde{x}_2+|\tilde{x}_3| .
\end{align*}
When $\tilde{x}\in T$, $\tilde{x}_3< 0$ and $\tilde{u}=0$ hold, 
the function $W(\tilde{x})$ satisfies 
\begin{align*}
\dfrac{\partial \tilde{W}}{\partial \tilde{x}}\tilde{f}
&= 
\mu S +\beta IS -B
+\gamma I+\mu I -\beta IS
+\mu R - \gamma I
\nonumber\\
&=
\mu\tilde{x}_1
+(\gamma-\gamma+\mu)\tilde{x}_2 
+\mu\tilde{x}_3
\nonumber\\
&=
-\mu W(\tilde{x})
\end{align*}
When $\tilde{x}\in T$, $\tilde{x}_3\ge 0$
and $\tilde{u}=0$ hold, we have 
\begin{align*}
\dfrac{\partial \tilde{W}}{\partial \tilde{x}}\tilde{f}
&= 
\mu S +\beta IS -B
+\gamma I+\mu I -\beta IS
+\gamma I - \mu R
\nonumber\\
&=
\mu\tilde{x}_1
+(2\gamma+\mu)\tilde{x}_2 
-\mu\tilde{x}_3 
\nonumber\\
&\le
-\mu W(\tilde{x})
\end{align*}
By virtue of \eqref{eq:Nest} with $\overline{B}=\hat{B}$, 
Lemma \ref{lem:sublevel} and the forward invariance of the set 
$[-\hat{x}_i,\infty)^3$, 
for each $x(0)\in T$, 
there exists $t_T\in[0,\infty)$, 
$\hat{\lambda}_2$, $\overline{L}$, $k> 0$ such that 
$x(t_T)\in\overline{G}(\hat{\lambda}_2,k,\overline{L})$. 
Therefore, Theorem \ref{thm:sireLyap} with $\tilde{u}=0$
shows that 
any compact set in $\Omega$ is contained in the domain of attraction. 


Next, 
writing $\overline{G}(\hat{\lambda}_2,k,\overline{L})$ as $\overline{G}$, 
Lemma \ref{lem:sublevel} guarantees that 
for any given compact set $\underline{G}$ contained in the interior of $G$, 
there exist sufficiently small $\hat{\lambda}_2$, $1/\overline{L}$, $k> 0$ such that 
$\overline{G}\supset\underline{G}$ is satisfied. 
As proved in Theorem \ref{thm:sireLyap}, 
there exist $\Psi_G\in\calKL$ and $\Gamma_G\in\calK$ such that 
\begin{align}
&
\forall t\in\Rset_+\hspace{1.5ex}
|\tilde{x}(t)|\le
\Phi_G(|\tilde{x}(0)|, t) + \Gamma_G({\esssup}_{t\in[0,t_T)}|\tilde{u}(t)|) 
\label{eq:ISSinG}
\\
&
\tilde{x}(t)\in\overline{G}(\hat{\lambda}_2,k,\overline{L})
\label{eq:invariancedist}
\end{align}
are satisfied for all $\tilde{x}(0)\in\overline{G}(\hat{\lambda}_2,k,\overline{L})$ 
and \eqref{eq:issuL}. 
Choose $|\cdot|$ as $1$-norm for consistency. 
Recall that \eqref{eq:Nest} holds 
for all $x(0)\in\Rset_+^3$ and 
all $B(t)\in[0,\overline{B}]$ with respect to an arbitrarily given constant 
$\overline{B}\ge 0$. 
Pick any $\Phi\in\calKL$ and $\Gamma\in\calK$ satisfying 
\begin{align}
&
\forall t\in\Rset_+\hspace{1.5ex}
\forall s\in\Rset_+\hspace{1.5ex}
\Phi(s,t)\ge \max\left\{\Phi_G(s,t),\, (s+|\hat{x}|)e^{-t}\right\}
\\
&
\forall t\in\Rset_+\hspace{1.5ex}
\forall s\in[0, \overline{u})\hspace{1.5ex}
\Gamma(s)\ge \min\left\{\Gamma_G(s),\, s+\hat{u}+|\hat{x}|\right\}
\label{eq:siregainoinner}
\\
&
\forall t\in\Rset_+\hspace{1.5ex}
\forall s\in[\overline{u},\infty)\hspace{1.5ex}
\Gamma(s)\ge s+\hat{u}+|\hat{x}| , 
\label{eq:siregainouter}
\end{align}
where 
$\overline{u}:=\min\{\delta\mu{P(\overline{L})}/{\lambda_1},
\delta\mu\overline{L}/\lambda_1\}$. 
Using $|x|\le |\tilde{x}|+|\hat{x}|$ and $|\tilde{x}|\le |x|+|\hat{x}|$ 
one arrives at 
\begin{align*}
\forall t\in\Rset_+\hspace{1.5ex}
|\tilde{x}(t)|\le
\Phi(|\tilde{x}(0)|, t) + \Gamma({\esssup}_{t\in\Rset_+}|\tilde{u}(t)|)  
\end{align*}
for all $\tilde{x}(0)\in\overline{G}(\hat{\lambda}_2,k,\overline{L})$ and 
all $\tilde{u}(t)\in[-\hat{B},\infty)$. 

\begin{remark}
As seen in \eqref{eq:ksire} and \eqref{eq:lam3sire}, 
the parameters $k$ and  $\lambda_3$
approach zero as $\overline{L}$ tends to $\infty$. 
Hence, the sublevel sets are expanded significantly in the $x_3$-direction. 
It allows the recovered population to increase, which is 
not bad in the control of infectious diseases. 
However, it is only an upper bound, and 
the recovered population does not necessarily swell that much. 
Indeed, we have the estimate \eqref{eq:Nest}. 
\end{remark}

\begin{remark}\label{rem:linISSgain}
For large magnitude of the input $\tilde{u}$, 
an ISS-gain function obtained in the proof of Theorem \ref{thm:sire} is 
bounded from above by a linear function as in \eqref{eq:siregainouter}. 
A linear bound of the ISS-gain function $\Gamma$ can also be verified 
for small magnitude of $\tilde{u}$ in \eqref{eq:siregainoinner}. 
In fact, the property $0<(P^{-1})^\prime(0)<\infty$ obtained from 
\eqref{eq:derPinv} implies that $\eta^{-1}$ can be bounded from 
above by a linear function in a neighborhood of the origin. 
Combining \eqref{eq:DVAFiss}, 
\eqref{eq:DVB+}, \eqref{eq:DVB-}, 
\eqref{eq:DVCiss}, 
\eqref{eq:DVDiss}, 
\eqref{eq:DVE-} and \eqref{eq:DVE+} 
leads to \eqref{eq:ISSinG} with a function $\Gamma_G$ which is 
bounded from above by a linear 
function in a neighborhood of the origin. 
Thus, a linear bound of $\Gamma$ in a neighborhood of the origin
follows from \eqref{eq:siregainoinner}. 
Therefore, for all magnitude of the input $\tilde{u}$, 
the ISS-gain function of the SIR model \eqref{eq:sir} is bounded from above by 
a linear function. 
\end{remark}

\section{Difficulties and Keys for Lyapunov Construction}\label{sec:keys}

The Lyapunov functions \eqref{eq:defVf} and \eqref{eq:defVe} proposed in 
this paper depict geometric structure with slopes and regions 
which the SIR model \eqref{eq:sir} requires. 
Note that the switching with sharp edges causing non-differentiability 
is not essential, but for simply highlighting the geometrical structure 
of sublevel sets. 
In fact, if one admits complexity sacrificing explicit analytical 
expression, numerical computation can help smooth out the edges to obtain 
differentiable Lyapunov functions.  
This section explains some of major components of the geometric structure, and 
elucidates points having hampered previous studies, and how this 
paper addresses those points to estimate reasonable domains of attraction 
without resorting to LaSalle's invariance principle. 
In the previous sections, 
all the derivatives of the constructed Lyapunov functions along trajectories 
${\partial \tilde{V}}/{\partial \tilde{x}}\cdot\tilde{f}$ 
are negative except at the target equilibrium   
in the absence of perturbation $\tilde{u}$.  
Such functions are referred to strict Lyapunov functions 
in the field of control \cite{MalFredStrLyapBook09}. 
The strict negativity has allowed us to prove ISS of the SIR model  
n the presence of the perturbation.

Everyone notices the conservation of populations taking 
place in between \eqref{eq:sirS} and \eqref{eq:sirI} through $\beta IS$. 
In the two regions 
\begin{align}
&
\hat{\bfB}:=\left\{ x\in\Rset_*^3 : 
x_1<\hat{x}_1 ,\ \hat{x}_1\hat{x}_2< x_1x_2
\right\}
\label{eq:blininearregionB}
\\
&
\hat{\bfE}:=\left\{ x\in\Rset_+^3 : 
x_1>\hat{x}_1 ,\ \hat{x}_1\hat{x}_2> x_1x_2
\right\} , 
\label{eq:blininearregionE}
\end{align}
the bilinear term $\beta IS$ in \eqref{eq:sirS} generates force to let $x_1$ stay 
away from the equilibrium $\hat{x}_1$ of interest. 
Hence, in $\hat{\bfB}$ and $\hat{\bfE}$, $x_2$ and $x_3$ should dominate 
the Lyapunov function in making its derivative negative. 
In the case $\hat{R}_0<1$ of the disease-free equilibrium, 
region $\hat{\bfE}$ disappears since $\hat{x}_2=0$. 
This structure of $\hat{\bfB}$ and $\hat{\bfE}$ is incorporated in 
the definition of $\tilde{V}$ and the partitioning functions in 
\eqref{eq:defVf} and \eqref{eq:defVe12}. 
To define a set taking care of $\hat{\bfB}$, 
the disease-free case can use a linear function in \eqref{eq:defVf} 
since $\tilde{x}_2$ is non-negative as discussed at the beginning 
of Section \ref{sec:lyapxe}.

Functions in the form of 
\begin{align}
\tilde{V}(\tilde{x})=
\tilde{V}_1(\tilde{x}_1)+\tilde{V}_2(\tilde{x}_2)+\tilde{V}_3(\tilde{x}_3)
\label{eq:Vsum}
\end{align}
have been widely used as Lyapunov functions in stability analysis and 
design of dynamical systems. They are often referred to as 
sum-separable (Lyapunov) functions or scalar (Lyapunov) functions 
\cite{DIRR15,LSSurvey83}. 
In this paper, let a function $\tilde{V}(\tilde{x}): \Rset_+^3\to\Rset_+$ be said to 
be separable if 
\begin{align}
j\ne i \ \Rightarrow \ 
\forall \tilde{x}\hspace{1.5ex}
\frac{\partial^2 \tilde{V}}{\partial \tilde{x}_j \partial \tilde{x}_i}=0. 
\label{eq:separability}
\end{align}
Clearly, continously differentiable functions in the form of 
\eqref{eq:Vsum} are separable\footnote{
The max-separable functions which are also popular in the literature \cite{JIA96LYA,LSSurvey83,DASITOWIRejc11,DIRR15}
are not separable in the sense of \eqref{eq:separability} since 
the switching depends on the whole $\tilde{x}$ instead of the individual $\tilde{x}_i$.}. 
The structure \eqref{eq:separability} is very popular and useful for constructing a Lyapunov 
function since the negativity of its derivative can be assessed by looking at 
components separately as 
\begin{align}
\frac{\partial \tilde{V}}{\partial \tilde{x}}(\tilde{x})
\tilde{f}(\tilde{x},\tilde{u})
=\sum_{i=1}^3
\frac{\partial \tilde{V}}{\partial \tilde{x}_i}(\tilde{x}_i)
\tilde{f}_i(\tilde{x},\tilde{u}) . 
\end{align}
and focusing on the interaction between subsystems 
$\tilde{x}_i=\tilde{f}_i(\tilde{x},\tilde{u})$, $i=1,2,3$ (see \cite{HILLMOY2,ITOTAC06,DASITOWIRejc11,MIRITOparasg15} and references therein). 
In fact, for popular models of infectious diseases, 
many preceding studies use the sum-separable form \eqref{eq:Vsum} 
(e.g., \cite{KOROLyap02,KOROLyap04,KOROgennonID06,OREGAN2010446,EnaNakIDlyapdelay11,SHUAIIDlyapu13,OREGAN2010446,BichSIRLyap14,FALLIDlypu07}).

There is a major difference between the endemic equilibrium and 
the disease-free equilibrium in constructing a Lyapunov function. 
The endemic case exhibits spiral trajectories around the equilibrium 
on the $S$-$I$ plane, i.e., the origin $\tilde{x}_{1,2}=0$ of the 
$(\tilde{x}_1,\tilde{x}_2)$-plane. 
If 
\begin{align}
x_1=\hat{x}_1=\frac{\gamma+\mu}{\beta}, 
\label{eq:undesirablestate}
\end{align}
then the SIR model \eqref{eq:sir} gives 
$\dot{x}_2=0$ and 
\begin{subequations}\label{eq:undesirablestatemov}
\begin{align}
\hat{x}_2<x_2 & \Rightarrow\ \dot{x}_1<0
\\
\hat{x}_2>x_2 & \Rightarrow\ \dot{x}_1>0 . 
\end{align}
\end{subequations}
No matter how far and close $x_2$ is to $\hat{x}_2$, 
this anti-parallel structure \eqref{eq:undesirablestatemov} of flows takes place. 
It disappears only at the equilibrium $x_2=\hat{x}_2$. 
Since $\dot{x}_2=\dot{\tilde{x}}_2=0$ hold for \eqref{eq:undesirablestate}, 
a function $\tilde{V}(\tilde{x})$ of the form \eqref{eq:separability} 
exhibits the decrease ${\partial \tilde{V}}/{\partial \tilde{x}}\cdot\tilde{f}<0$ 
for $\tilde{x}_2 \ne 0$ (i.e., $x_2 \ne \hat{x}_2$) only if 
\begin{subequations}\label{eq:undescontra}
\begin{align}
\hat{x}_3<
x_3\le \frac{\gamma}{\mu}x_2, \ 
\hat{x}_2<x_2 & \Rightarrow\ 
\left.\dfrac{\partial \tilde{V}}{\partial \tilde{x}_1}(\tilde{x}_1)\right|_{\tilde{x}_1=0}>0
\label{eq:undescontraup}
\\
\hat{x}_3>x_3\ge 
\frac{\gamma}{\mu} x_2, \ 
\hat{x}_2>x_2 & \Rightarrow\ 
\left.\dfrac{\partial \tilde{V}}{\partial \tilde{x}_1}(\tilde{x}_1)\right|_{\tilde{x}_1=0}<0 ,  
\label{eq:undescontradown}
\end{align}
\end{subequations}
provided that 
\begin{subequations}\label{eq:sepabowl}
\begin{align}
\hat{x}_3<x_3 & \Rightarrow\ 
\dfrac{\partial \tilde{V}}{\partial \tilde{x}_3}(\tilde{x}_3)\ge 0
\\
\hat{x}_3>x_3 & \Rightarrow\ 
\dfrac{\partial \tilde{V}}{\partial \tilde{x}_3}(\tilde{x}_3)\le 0 . 
\end{align}
\end{subequations}
The two conclusions in \eqref{eq:undescontra} contradict each other. 
This situation is illustrated by Fig. \ref{fig:keys} (a) on 
$(x_1,x_2)$-plane. 
The positive definiteness of $\tilde{V}$ requires \eqref{eq:sepabowl}
at least locally at $\tilde{x}_3=0$, i.e., in a neighborhood of 
$\tilde{x}_3=0$. Thus, any (piecewise) continuously differentiable function 
$\tilde{V}(\tilde{x})$ which is separable \eqref{eq:separability} 
cannot be a Lyapunov function in the sense of 
${\partial \tilde{V}}/{\partial \tilde{x}}\cdot\tilde{f}<0$. 
It is worth mentioning that property \eqref{eq:sepabowl} is usually 
employed in the region of interest, instead of 
the existence of a small neighborhood of $\tilde{x}_3=0$. 
In obtaining reasonable level sets to 
secure an estimate of domain of attraction, 
violating \eqref{eq:sepabowl} is usually too hard. 
The Lyapunov function $\tilde{V}(\tilde{x})$ constructed in 
\eqref{eq:defVe} is not separable. 
In fact, the conditions of the partitioning in \eqref{eq:defVe12} 
require both $x_1$ and $x_2$. 
Importantly, 
the second case \eqref{eq:undescontradown} disappears 
from \eqref{eq:undescontra} in the disease-free case since $
\hat{x}_2=\hat{x}_3=0$. Thus, 
the contradiction does not rise in the disease-free case. 
This is why \eqref{eq:defVe12} employed the slope $k>0$, while 
\eqref{eq:defVf} does not.


As seen in the definition \eqref{eq:defG} of $G$, 
Theorem \ref{thm:sire} dealing with the endemic equilibrium $x_f$ 
does not cover a triangle region at the corner of 
$x_1$-axis and $x_2$-axis. 
No matter how much one modifies Lyapunov functions, there remains 
an uncovered region of non-zero volume at that corner along the $x_1$-axis. 
To see this, notice that \eqref{eq:sirS} and 
\eqref{eq:sirI} satisfy the implication 
\begin{align}
x_1<\hat{x}_1, \ x_2>0
& \Rightarrow\ \dot{x}_2<0  
\label{eq:I2equilidown}
\\
x_2=0 , \ x_1<x_{f,1} & \Rightarrow\ \dot{x}_2=0 , \ \dot{x}_1>0. 
\label{eq:I2equilizero}
\end{align}
Here, $\hat{x}_1=(\gamma+\mu)/\beta$ and $x_{f,1}=B/\mu$. The relationship 
$\hat{x}_1<x_{f,1}$ follows from $\hat{R}_0>1$. 
Define 
\begin{align}
\hat{\bfD}:=\left\{
\tilde{x}\in\Rset_+^3 : \tilde{x}_1< 0, \ -\hat{x}_2<\tilde{x}_2< 0
\right\}. 
\label{eq:defDhat}
\end{align}
Consider an initial state $x(0)\in \hat{\bfD}$ which is arbitrarily 
close to a point $[x_1(0),0_,0]^T$ for some $x_1(0)\in(0,\hat{x}_1)$. 
According to \eqref{eq:I2equilidown} and \eqref{eq:I2equilizero}, 
the trajectory $x(t)$ flows along 
the plane of $x_2=0$ ($x_1$-axis on $(x_1,x_2)$-plane) by 
decreasing its distance to the plane ($x_1$-axis) further. 
The level set of a Lyapunov function 
passing through the point $x=x(0)$ must be intersected transversally 
by the trajectory $x(t)$ inward. Hence, the level set must intersect 
the plane (the $x_1$-axis). 
Due to \eqref{eq:I2equilizero}, 
that level set crossing over\footnote{
Since $\{x\in\Rset_+^3\}$ is forward invariant for \eqref{eq:sir}, 
one can consider any artificial flow for $x\not\in\Rset_+^3$}  
$x_1$-axis on $(x_1,x_2)$-plane can never 
cross $x_1$-axis again as long as $x_1<x_{f,1}$. 
This implies the existence of a sublevel set to which the 
equilibrium $x_f$ belongs. 
At the non-target equilibrium $x_f$,  
the derivative of any Lyapunov function candidate $\tilde{V}$ along the 
trajectory is zero. Hence, the function 
$\tilde{V}$ is not a strict Lyapunov function for the target equilibrium $x_e$. 
This mechanism is illustrated in Fig. \ref{fig:keys} (b). 
In this way, independently of methods of constructing a Lyapunov function, 
there is an area remaining uncovered by 
any sublevel sets along $x_1$-axis in region $\hat{D}$. 
Theorem \ref{thm:sire} achieves the construction of a Lyapunov function 
by avoiding that prohibited region intentionally. 

\begin{figure}[t]
\unitlength=0.44mm
\begin{center}
\begin{tabular}{cc}
\begin{picture}(88,94)(3,-10)
\put(10,0){\vector(0,1){70}}
\put(10,0){\vector(1,0){80}}
\put(5,-4){\makebox(0,0){$0$}}
\put(97,0){\makebox(0,0){$x_1$}}
\put(10,76){\makebox(0,0){$x_2$}}
\multiput(40,0)(0,1.2){58}{\line(0,1){0.6}}
\multiput(10,24)(1.2,0){66}{\line(1,0){0.6}}
\put(40,-6){\makebox(0,0){$\hat{x}_1$}}
\put(3,22){\makebox(0,0){$\hat{x}_2$}}
\put(40,24){\circle*{3}}
\put(54,29){\makebox(0,0){$\hat{x}\!=\!x_e$}}
\thicklines
\color{magenta}
\put(40,43){\line(0,1){12}}
\put(40,3){\line(0,1){12}}
\color{red}
\put(46,43){\line(0,1){12}}
\put(34,3){\line(0,1){12}}
\put(49,61){\makebox(0,0){{\small high}}}
\put(31,18){\makebox(0,0){{\small high}}}
\color{blue}
\put(34,43){\line(0,1){12}}
\put(46,3){\line(0,1){12}}
\put(31,61){\makebox(0,0){{\small low}}}
\put(49,18){\makebox(0,0){{\small low}}}
\color{black}
\thicklines
\put(51,49){\vector(-1,0){24}}
\put(29,9){\vector(1,0){24}}
\end{picture}
&
\begin{picture}(98,94)(-2,-10)
\put(7,35){\vector(0,1){35}}
\put(7,35){\vector(1,0){83}}
\put(2,31){\makebox(0,0){$0$}}
\put(93,30){\makebox(0,0){$x_1$}}
\put(7,76){\makebox(0,0){$x_2$}}
\multiput(41,0)(0,1.2){61}{\line(0,1){0.6}}
\multiput(7,65)(1.2,0){68}{\line(1,0){0.6}}
\put(41,-6){\makebox(0,0){$\hat{x}_1$}}
\put(0,65){\makebox(0,0){$\hat{x}_2$}}
\put(65,35){\circle*{3}}
\put(41,65){\circle*{3}}
\put(55,70){\makebox(0,0){$\hat{x}\!=\!x_e$}}

\put(67,39.5){\makebox(0,0){$x_f$}}
\put(27,18){\makebox(0,0){
$\underbrace{\rule{1ex}{0ex}}_{\mbox{\small required\rule{4ex}{0ex}}}$
}}
\put(68,17){\makebox(0,0){
$\underbrace{\rule{1ex}{0ex}}_{\small \mbox{\rule{4ex}{0ex}possible}}$
}}
\put(54,52){\makebox(0,0){
$\overbrace{\rule{1ex}{0ex}}^{\small \mbox{prohibited}}$
}}
\thicklines
\qbezier(11,43)(20,36)(60,36.5)
\put(59,36.4){\vector(1,0){3}}
\color{red}
\put(15,43){\line(1,-1){14}}
\put(44,26){\line(1,1){16}}
\put(69,26){\line(1,1){16}}
\put(50,7){\makebox(0,0){{\small high}}}
\color{blue}
\put(20,43){\line(1,-1){14}}
\put(39,26){\line(1,1){16}}
\put(64,26){\line(1,1){16}}
\put(34,45){\makebox(0,0){{\small low}}}
\end{picture}
\\[-.3ex]
{\small (a) Contradicting the separability.}
& {\small \begin{tabular}{c}(b) Necessity to encircle the equilibrium $x_f$ \\
when including points arbitrarily \\
close to $x_1$-axis: a contradiction.\end{tabular}} 
\end{tabular}
\vspace{-1.2ex}
\end{center}
\caption{Obstacles in constructing a strict Lyapunov function in terms of level sets: 
The lines and the arrows are segments of level sets and trajectories, respectively.}
\label{fig:keys}
\end{figure}
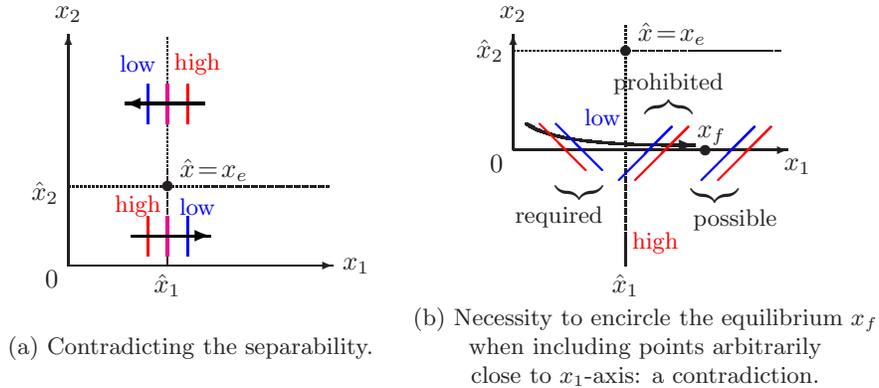
\section{Concluding Remarks}\label{sec:conc}

This paper has proved ISS of the SIR model with respect to perturbation of 
the newborn/immigration rate in both the endemic and the disease-free scenarios. 
The establishment is based on the construction of ISS Lyapunov functions. 
The functions play the role of traditional Lyapunov functions when 
the newborn/immigration rate is constant. It has been discussed that 
the proposed Lyapunov functions give the largest possible estimate of the domain of 
attraction and the ultimate boundedness in a qualitative sense. 
The developments do not rely on the simplifying assumptions which are 
often employed in the literature. The derivative of the proposed 
Lyapunov functions is strictly negative everywhere in 
sublevel sets of the Lyapunov functions except at the target equilibrium. 
This has allowed us to bypasses LaSalle's 
invariance principle, and to establish ISS addressing the perturbation. 
This paper has elaborated the construction of Lyapunov functions 
by distilling essential difficulties posed by the SIR model. 

It seems that no attention had been paid to ISS of the SIR model 
with respect to perturbation of the newborn/immigration rate, 
i.e., robustness of the endemic equilibrium and the disease-free equilibrium. 
Proving the ISS property had not been possible either since 
Lyapunov functions were not strict \cite{MalFredStrLyapBook09}, due to the reason clarified in 
Section \ref{sec:keys}. 
The robustness of the endemic equilibrium may sound undesirable 
in view of preventing disease spread. Nevertheless, controlling the 
peak and lowering the steady-state level of the infected population  
are beneficial to societies. The derivative of the ISS Lyapunov functions 
developed in this paper confirms that the increase of the death rate $\mu$ is 
the only almighty parameter that can not only reduce the peak and result in 
faster convergence, but also reduce the fluctuation of the state with respect 
to the perturbation of the newborn/immigration rate. 
It is also estimated that although the reduction of the transmission rate $\beta$ 
does not have such mighty effect.  it can simply 
avoid the endemic equilibrium or lower the steady-state 
level of the infected population. 
These are already known by using traditional local analysis and phase 
portraits. Nevertheless, the geometric structure revealed by 
the region partitioning and slopes of the proposed Lyapunov functions 
gives an insight into the flow of the populations 
in the SIR model globally in the state space. 
Importantly and interestingly, the ISS property proved in this paper 
has confirmed a linear transition of the magnitude of 
the state variables with respect to the perturbation magnitude 
of the newborn/immigration rate globally in spite of 
the bifurcation from the disease-free equilibrium to 
the endemic equilibrium and vise versa.

Needless to say, Lyapunov functions are known to be useful for designing 
controllers, and investigating control design for the SIR model 
is the most important direction of the future research. 
To this end, the proposed Lyapunov functions aiming at geometric 
understanding the SIR model can be modified into functions which ease the 
construction of controllers by smoothing out the edges of switching \cite{JIA96LYA}. 
In fact, the gradient-type design \cite{SONuniv98,FREEBK} based on a non-smooth Lyapunov function 
results in a discontinuous controller, and the notion of the system solution and 
the derivative need to be adjusted mathematically \cite{BACROSLiapbook05}. 
Bypassing such technicalities would be practically advantageous. 

%

\medskip
\medskip

\end{document}